\documentclass[12pt,leqno]{amsart}
\usepackage{color}
\usepackage{euscript}
\usepackage{amssymb,amsfonts}
\textheight9in
\textwidth6.5in
\hoffset-2.35cm
\voffset-1cm
\parskip 5pt
\usepackage[hypertex]{hyperref}

\swapnumbers
\newtheorem{theorem}{Theorem}[section]

\newtheorem{lemma}[theorem]{Lemma}
\newtheorem{proposition}[theorem]{Proposition}

\newtheorem*{theoremA}{Theorem~A}
\newtheorem*{theoremB}{Theorem~B}
\newtheorem*{theoremC}{Theorem~C}
\newtheorem*{theoremD}{Theorem~D}

\theoremstyle{definition}

\newtheorem{odstavec}[theorem]{}
\newtheorem{example}[theorem]{Example}
\newtheorem{remark}[theorem]{Remark}
\newtheorem{definition}[theorem]{Definition}

\newtheorem*{formality-theorem}{Formality}

%DEFINICE
\def\tE{{E}}

\DeclareMathOperator{\id}{id} \def\RRR{\mathbb R} \def\frakS{\mathfrak S}
 \def\deviace{{\mathfrak D}}
\def\Chi{{\EuScript X}}
\def\FF{{\EuScript F}}

 \def\bbbR{{\mathbb R}} \def\bbR{\bbbR}
\def\Tr{{\it Tr\/}}  \def\Rn{{\bbR^n}} \def\Con{{\it Con\/}}
\def\sqdots{. \hskip -.01em .\hskip -.01em .}
\def\deltah{\delta_{\rm h}}\def\Graphname{\Lambda}
\def\vforder{\hbox{${\it ord}_{\rm vf}$}}
\def\sgn{{\rm sgn}}
\def\deltav{\delta_{\rm v}}\def\Tot{{\rm Tot\/}} \def\Zh{Z_{\rm h}}
\def\cyclsum#1{{\raisebox{-.1em}{\Large $\circ$}\hskip-1.2em\sum_#1}}
\def\cyclsumm#1{{\raisebox{-.1em}{\Large $\circ$}\hskip-1.4em\sum_#1}}
\def\cyclsumvf#1{{\raisebox{-.1em}{\Large $\circ$}\hskip-1.5em\sum_#1}}
\def\textcyclsum{{\raisebox{-.05em}{\large $\circ$}\hskip-.85em\sum}}
\def\bbB{{\mathbb B}} \def\Ker{{\it Ker\/}}  
 \def\gO{{\EuScript O\/}} \def\vt{\vartheta}
 \def\Span{{\it Span\/}}
 
\def\Grd #1#2{\Gr^{#1}[#2](d)} \def\Kr{{\EuScript K}}
\def\Gr{{\EuScript {G}\rm r}} \def\Vert{{\it Vert\/}}
 \def\Krtor{{\EuScript K}}
\def\Cont{{\EuScript {C}\rm ont}}

\def\Rada#1#2#3{#1_{#2},\dots,#1_{#3}} \def\lot{{\it l.o.t.}}
 
\def\sbbox{{\raisebox {.1em}{\rule{.4em}{.4em}} \hskip .1em}} 
\def\pa{\partial}  
\def\Contor{{\it Con\/}}
\def\Nat{{\mathfrak {Nat}\/}} 
\def\rada#1#2{#1,\ldots,#2}
\def\prodrada#1#2{#1 \cdots #2}
\def\Rada#1#2#3{#1_{#2},\dots,#1_{#3}}

\def\iT{{\overline T}}

\def\iL{{\overline L}}

\def\cases#1#2#3#4{
                  \left\{
                         \begin{array}{ll}
                           #1,\ &\mbox{#2}
                           \\
                           #3,\ &\mbox{#4}
                          \end{array}
                   \right.
}

%OBRAZKY Puvodni
%\def\anchor{$\unitlength .25cm
%\begin{picture}(1,1.4)(-1,-.7)
%\put(-.45,.55){\makebox(0,0)[cc]{$\sbbox$}}
%\put(-.5,-.8){\vector(0,1){1.2}}
%\end{picture}$}

%OBRAZKY
\def\anchor{\unitlength .25cm
\begin{picture}(1,1.4)(-1,-.7)
\put(-.45,.55){\makebox(0,0)[cc]{$\sbbox$}}
\put(-.5,-.8){\vector(0,1){1.2}}
\end{picture}}

\def\anteanchor#1{\unitlength .25cm
\begin{picture}(1.5,1)(-1,-.7)
\put(-.45,-.55){\makebox(0,0)[cc]{$\bullet$}}
\put(-.45,-.55){\makebox(0,0)[lt]{\hskip .2em \scriptsize $#1$}}
\put(-.5,-.2){\vector(0,1){1.2}}
\end{picture}}

\newcommand{\Gam}{\Gamma}
\newcommand{\lam}{\lambda}

\begin{document}

\title[Combinatorial differential geometry -- torsion case]{
COMBINATORIAL DIFFERENTIAL GEOMETRY AND IDEAL BIANCHI--RICCI
IDENTITIES II -- the torsion case
}

\begin{abstract}
This paper is a continuation of~\cite{janyska-markl}, dealing with a
general, not-necessarily torsion-free, connection.  It characterizes
all possible systems of generators for vector-field valued operators
that depend naturally on a set of vector fields and a linear
connection, describes the size of the space of such operators and
proves the existence of an `ideal' basis consisting of operators with
given leading terms which satisfy the (generalized) Bianchi--Ricci
identities without corrections.
\end{abstract}

\author[Jany\v ska, Markl]{J.~Jany\v ska, M.~Markl}

\thanks{The first author was supported
by the Ministry of Education of the Czech Republic under the Project
MSM0021622409 and by the grant GA \v CR 201/09/0981. The second author was supported 
   by the grant GA \v CR 201/08/0397 and by
   the Academy of Sciences of the Czech Republic,
   Institutional Research Plan No.~AV0Z10190503}

\address{
{\ }
\newline
Department of Mathematics and Statistics, Masaryk University
\newline
Kotl\'a\v{r}sk\'a 2, 611 37 Brno, The Czech Republic
\newline
E-mail: {\tt janyska@math.muni.cz}
\newline
{\ }
\newline
Mathematical Institute of the Academy 
\newline
{\v Z}itn{\'a} 25,
         115 67 Prague 1, The Czech Republic
\newline
E-mail: {\tt markl@math.cas.cz}
}

\keywords{Natural operator, linear connection, , torsion, 
reduction theorem, graph.
}

\subjclass[2000]{20G05, 53C05, 58A32}

\maketitle

\bibliographystyle{plain}
\baselineskip18pt plus 1pt minus 1pt
\parskip3pt plus 1pt minus .5pt

\def\prodrada#1#2{#1 \cdots #2}
\def\pa{\partial} \def\Ker{{\it Ker\/}}
\def\Rada#1#2#3{#1_{#2},\dots,#1_{#3}}
\def\Kr{{\EuScript K}}
\def\rada#1#2{#1,\ldots,#2}
\def\lot{{\it l.o.t.}}
\def\iR{{\overline R}}
\def\iT{{\overline T}}
\def\iL{{\overline L}}
\def\sqdots{\ldots}
\def\cyclsumnic{{\raisebox{-.1em}{\Large $\circ$}\hskip-1.2em\sum}}
\def\cyclsumvf#1{{\raisebox{-.1em}{\Large
      $\circ$}\hskip-1.5em\sum_#1}}
\def\textcyclsum{{\raisebox{-.05em}{\large $\circ$}\hskip-.85em\sum}}

\def\today{January 19, 2011}
\catcode`\@=11
\def\@evenfoot{\rule{0pt}{20pt}[\today] \hfill{\tt \jobname.tex}}
\def\@oddfoot{\rule{0pt}{20pt}{\tt \jobname.tex}\hfill [\today]}

\noindent 
{\bf Methods of the paper} are based on the graph complex approach
developed in~\cite{markl:ig,markl:na}. Most of the proofs
in this paper are parallel to the proofs of the analogous statements for
the torsion-free case given
in~\cite{janyska-markl}.

\vskip .5em

\noindent 
{\bf Plan of the paper.}
In Section~\ref{s1} we recall the basis features of the torsion case
and quote the classical reduction theorem due to \L{}ubczonok~\cite{Lub72}. 
In Section~\ref{s2} we formulate the main results of the paper
(Theorems~A--D) and show some explicit
calculations. The difference from the torsion-free case is obvious
already in the formulation of Theorem~A. In contrast to the
corresponding~\cite[Theorem~A]{janyska-markl}, we allow the basis operators
to be indexed by a two-parameter set $S$ rather than just natural
numbers $n\geq 3$ as in the torsion-free case. We had to accept this
generality because the `classical' basis consist of two
families of operators -- the iterated covariant derivatives of the
curvature {\em and\/} the iterated covariant derivatives of the
torsion, see Subsection~\ref{a}.   

All proofs are contained in Section~\ref{proofs}. As they are parallel
to the proofs in the torsion-free case of~\cite{janyska-markl}, we had
two extremal choices -- either to give no proofs at all, saying that they
are `obvious' modifications of the proofs of~\cite{janyska-markl}, or
to modify the proofs of~\cite{janyska-markl} and include them in full
length.  We choose a compromise and included only proofs which are
`manifestly' different from the torsion-free case, namely those
dealing directly with the corresponding graph complex.

\vskip .5em
\noindent
{\bf Conventions:} At several places, the abbreviation \lot\ for
`lower order terms' is used. Its precise meaning will either be
explained or will be clear from the context.  We assume that this paper
is read in conjunction with~\cite{janyska-markl}, so we refer to that
article very often. We will however keep the formulation of the main
theorems self-consistent. 

\vskip .5em

\noindent
{\bf Notation:}
We will use notation parallel to that
of~\cite{janyska-markl}, the distinction against the torsion-free case
will be marked by the tilde~$\widetilde{(-)}$. For instance, while
$\Con$ denoted in~\cite{janyska-markl} the bundle of {\em
torsion-free\/} connections, here $\Con$ denotes the bundle of {\em
all\/} linear connections and $\widetilde\Contor$ the subbundle 
of torsion-free connections.

\section{Reduction theorems for non-symmetric connections}
\label{s1}

In this paper, $M$ will always denote a smooth manifold. The letters
\hbox{$X$, $Y$, $Z$, $U$, $V$,...,} with or without indexes, will denote
(smooth) vector fields on $M$. We also consider a linear (generally
non-symmetric) connection $\Gamma$ on $M$ with Christoffel symbols
$\Gamma^\lambda _{\mu \nu }$, $1 \leq \lambda ,\mu ,\nu \leq \dim(M)$,
see, for example,~\cite[Section III.7]{kobayashi-nomizu}. The symbol
$\nabla$ will denote the covariant derivative with respect to
$\Gamma$, and by $\nabla^{(r)}$ we will denote the sequence of
iterated covariant derivatives up to order $r$, i.e.  $\nabla^{(r)} =
(\id,\nabla,\dots,\nabla^r)$. The letter $R$ will denote the curvature
$(1,3)$-tensor field and the letter $T$ will denote the torsion
$(1,2)$-tensor field of $\Gamma $. In order to get formulas compatible
with the notation of our earlier paper~\cite{janyska-markl}, we assume
$R(X,Y)(Z)=\nabla_{[X,Y]}Z - [\nabla_X,\nabla_Y]Z$, i.e.~our curvature
tensor $R$ differs from the curvature tensor
of~\cite{kobayashi-nomizu} by the sign.

For non-symmetric connections we have (see, for 
example,~\cite[Section~III.5]{kobayashi-nomizu}) the first Bianchi identity 
\begin{equation}
\label{Eq: 1st B.i.}  
\cyclsumm {{X,Y,Z}} R(X,Y)(Z) = - \cyclsumm
{{X,Y,Z}} \big[(\nabla_{X} T)(Y,Z) + T(T(X,Y),Z)\big] \,,
\end{equation}
and the second Bianchi identity
\begin{equation}\label{Eq: 2nd B.i.}
\cyclsumm {{U,X,Y}} (\nabla_{U} R)(X,Y)(Z) = 
- \cyclsumm {{U,X,Y}}  R(T(U,X),Y)(Z) 
\,,
\end{equation}
where $\textcyclsum$ is the cyclic sum over the indicated
vector fields.
Further, if $\Phi$ is a $(1,r)$-tensor field, then we have the Ricci
identity
\begin{align}
\label{Eq: R.i.}
(\nabla_{X}\nabla_{Y}\Phi 
& - \nabla_{Y}\nabla_{X}\Phi)
(Z_1,\dots,Z_r) =
- R(X,Y)(\Phi(Z_1,\dots,Z_r))
\\
& \nonumber
+ \sum_{j=1}^r\Phi(Z_1,\dots,R(X,Y)(Z_j),\dots, Z_r) 
- (\nabla_{T(X,Y)} \Phi)(Z_1,\dots,Z_r)\,.
\end{align}

It is well-known, see, for
example,~\cite[Section~III.7]{kobayashi-nomizu}, 
that $\Gamma$ induces a torsion-free
connection $\widetilde\Gamma$ whose Christoffel symbols are obtained
by symmetrization of the Christoffel symbols of $\Gamma$.  Then
$\Gamma = \widetilde\Gamma + \tfrac12 T$ and we get
\begin{align}\label{Eq: 1.4}
R(X,Y)(Z)
& = 
\widetilde R(X,Y)(Z) - \tfrac 12 (\widetilde\nabla_X T)(Y,Z) 
+ \tfrac 12 (\widetilde\nabla_Y T)(X,Z)
\\
& \quad\nonumber
- \tfrac 14 T(X,T(Y,Z)) 
+ \tfrac 14 T(Y,T(X,Z))
- \tfrac 12 T(T(X,Y),Z)\,,
\end{align}
where $\widetilde R$ is the curvature of $\widetilde\Gam$ and
$\widetilde\nabla$ is the covariant derivative with respect to
$\widetilde\Gam$.  Further, $ \nabla_XY = \widetilde\nabla _X Y +
\tfrac 12 T(X,Y) $ which implies, for any $(1,r)$-tensor field $\Phi$,
\begin{align}\label{Eq: 1.5}
(\nabla_X\Phi)( & Y_1,\dots,Y_r) = 
(\widetilde\nabla_X\Phi) (Y_1,\dots,Y_r) + 
\\
&\quad\nonumber
+ \tfrac12T(X,\Phi(Y_1,\dots,Y_r))
- \tfrac12\sum_{j=1}^r\Phi(Y_1,\dots,T(X,Y_j),\dots,Y_r)
\end{align}
If we apply covariant derivatives on the identity~\eqref{Eq: 1.5}, we get
\begin{equation}\label{Eq: 1.6}
\nabla^{r} \Phi = \widetilde\nabla^{r} \Phi + l.o.t.\,,
\end{equation}
where $l.o.t.$ is a polynomial constructed from 
$\nabla^{(r-1)}\Phi$ and $\nabla^{(r-1)}T$. 
Especially, for the torsion tensor,
\begin{equation}\label{Eq: 1.7}
\nabla^{r} T = \widetilde\nabla^{r} T + l.o.t.\,.
\end{equation}
Similarly, from~\eqref{Eq: 1.4},
\begin{equation}\label{Eq: 1.8}
\nabla^{r}  R = \widetilde\nabla^{r}  \widetilde R + o.t.\,,
\end{equation}
where $o.t.$ is a polynomial constructed from 
$\nabla^{(r+1)}T$ and $\nabla^{(r-1)} R$. 

It is well-known, \cite[p.~91]{Veb27} 
and~\cite[p.~162]{Sch54}, that differential concomitants
(natural polynomial tensor fields in terminology of 
natural bundles \cite{kolar-michor-slovak,KruJan90, Nij52, Nij72,
terng:AMJ78}) depending on tensor fields and a torsion-free connection
can be expressed through given tensor fields, the curvature tensor
of given connection and their covariant derivatives. This result
is known as the first (operators on connections only) and the
second reduction theorems.   

Using the above splitting of connections with torsions into
the symmetric connections and the torsions, we can prove the reduction
theorem for connections with torsions, see \L{}ubczonok \cite{Lub72}.
Let us quote \L{}ubczonok's formulation of the {\em 
reduction theorem\/} for connections with torsions.

\begin{theorem}\label{Th: RT1}
If $\Omega$ is a differential concomitant of order $r$ of 
$\{\Phi_k\}_{k=1,\dots,s}$ and of the linear connection 
$\Gamma^\lambda_{\mu\nu}$ with torsion, then $\Omega$ 
is an (ordinary) concomitant of the quantities:
\begin{gather*}
\{\widetilde\nabla_{\kappa_l,\dots,\kappa_1}\Phi_k\}\,, 
\quad l=0,1,\dots,r\,,\,\,\,k=1,\dots,s\,,
\\
\{\widetilde\nabla_{\kappa_l,\dots,\kappa_1}  T^\lambda_{\mu\nu}\}\,, 
\quad l=0,1,\dots,r\,,
\\
\{\widetilde\nabla_{\kappa_1,\dots,\kappa_l}\widetilde R_\rho{}^\lam{}_{\mu\nu}\}\,, 
\quad l=0,1,\dots,r-1\,,
\end{gather*}
where $\widetilde R_\rho{}^\lam{}_{\mu\nu}$, $\widetilde \nabla$ denote 
the curvature tensor and the covariant derivative with respect to 
$\widetilde\Gamma^\lambda_{\mu\nu}$.
\end{theorem}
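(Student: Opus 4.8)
The plan is to derive the statement from the classical (torsion-free) first and second reduction theorems of Veblen~\cite[p.~91]{Veb27} and Schouten~\cite[p.~162]{Sch54} by exploiting the canonical splitting $\Gamma = \widetilde\Gamma + \tfrac12 T$ recalled above, following the strategy of \L{}ubczonok~\cite{Lub72}. The crucial observation is that the torsion $T$ is a \emph{tensorial} quantity which is algebraic in the Christoffel symbols themselves, namely $T^\lambda_{\mu\nu} = \Gamma^\lambda_{\mu\nu} - \Gamma^\lambda_{\nu\mu}$, whereas the symmetrized connection $\widetilde\Gamma$, obtained by symmetrizing $\Gamma$, is genuinely torsion-free. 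Consequently the $r$-jet of $\Gamma$ carries exactly the same data as the $r$-jet of $\widetilde\Gamma$ together with the $r$-jet of the tensor field $T$.

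First I would reinterpret $\Omega$. By means of the splitting, the differential concomitant $\Omega$ of the tensor fields $\{\Phi_k\}$ and of the connection $\Gamma$ with torsion becomes, with no loss of information, a differential concomitant of order $r$ of the torsion-free connection $\widetilde\Gamma$ together with the \emph{enlarged} family of tensor fields $\{\Phi_1,\dots,\Phi_s,\,T\}$. Next I would apply the torsion-free reduction theorems to this reinterpretation: since $\widetilde\Gamma$ is torsion-free, those theorems assert that $\Omega$ is an ordinary concomitant of the curvature $\widetilde R$ of $\widetilde\Gamma$, of the tensor fields $\Phi_1,\dots,\Phi_s,T$, and of all their iterated $\widetilde\nabla$-covariant derivatives.

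The final step is the order count, which produces the precise cutoffs in the statement. Since $\widetilde R$ is built from the $1$-jet of $\widetilde\Gamma$, a $\widetilde\nabla$-derivative of $\widetilde R$ of order $l$ involves derivatives of $\widetilde\Gamma$ up to order $l+1$; order $r$ in $\widetilde\Gamma$ therefore admits $\widetilde\nabla$-derivatives of $\widetilde R$ only up to order $r-1$. By contrast, $T$ and each $\Phi_k$ enter at their own jet order, so one obtains their $\widetilde\nabla$-derivatives up to order $r$. This yields exactly the three families $\{\widetilde\nabla^{(r)}\Phi_k\}$, $\{\widetilde\nabla^{(r)}T\}$ and $\{\widetilde\nabla^{(r-1)}\widetilde R\}$ listed above.

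The one genuinely delicate point, as opposed to the routine order bookkeeping, is the legitimacy of treating the connection-derived quantity $T$ as an independent tensor-field argument in theorems designed for honest tensor fields. Here I would check that the naturality (equivariance under coordinate changes) of $\Omega$ is faithfully transported by the splitting, so that $\Omega$ viewed in the variables $(\widetilde\Gamma;\Phi_1,\dots,\Phi_s,T)$ is again an equivariant concomitant to which the classical results apply verbatim. Once this is granted, the conclusion is a formal consequence of the cited torsion-free reduction theorems combined with the order count above.
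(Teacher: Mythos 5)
Your proposal is correct and follows essentially the same route as the paper, which quotes this theorem from \L{}ubczonok and indicates precisely this argument: split $\Gamma = \widetilde\Gamma + \tfrac12 T$, observe that $T$ is a tensor field and $\widetilde\Gamma$ a torsion-free connection carrying jointly the same jet data as $\Gamma$, and then invoke the classical (torsion-free) reduction theorems of Veblen and Schouten with the enlarged family of tensor arguments $\{\Phi_1,\dots,\Phi_s,T\}$. Your order bookkeeping and the equivariance check for the splitting are exactly the points that make this reduction work, so nothing is missing.
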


Formally, we can write $\Omega(\partial^{(r)}\Phi_k,\partial^{(r)}\Gamma)
= \widetilde\Omega(\widetilde\nabla^{(r)}\Phi_k, \widetilde\nabla^{(r)} T,
\widetilde\nabla^{(r-1)}\widetilde R)$.

\begin{remark}
The original \L{}ubczonok's result quoted above assumes the same
maximal order $r$ of derivatives of $\Phi_k$ and $\Gam$. But Theorem
\ref{Th: RT1} holds if the order with respect to $\Gamma$ is $(r-1)$
only, i.e. $\Omega(\partial^{(r)}\Phi_k,\partial^{(r-1)}\Gamma) =
\widetilde\Omega(\widetilde\nabla^{(r)}\Phi_k,
\widetilde\nabla^{(r-1)} T, \widetilde\nabla^{(r-2)}\widetilde R)$.
Theorem \ref{Th: RT1} is in fact valid for any order $s\ge r-1$ with
respect to $\Gam$, see, for example,~\cite{Jan04}.
\end{remark}

Thanks to the above relations (\ref{Eq: 1.6})--(\ref{Eq: 1.8})
between covariant derivatives with respect to $\Gamma$ and
$\widetilde\Gamma$, we can reformulate the reduction Theorem~\ref{Th:
  RT1} directly for connections with torsions.

\begin{theorem}\label{Th: RT2}
If $\Omega$ is a differential concomitant of order $r$ of 
$\{\Phi_k\}_{k=1,\dots,s}$ and of the linear connection 
$\Gamma^\lambda_{\mu\nu}$ with torsion, then $\Omega$ 
is an ordinary concomitant of the quantities:
\begin{gather*}
\{\nabla_{\kappa_l,\dots,\kappa_1}\Phi_k\}\,, 
\quad l=0,1,\dots,r\,,\,\,\,k=1,\dots,s\,,
\\
\{\nabla_{\kappa_l,\dots,\kappa_1}  T^\lambda_{\mu\nu}\}\,, 
\quad l=0,1,\dots,r\,,
\\
\{\nabla_{\kappa_1,\dots,\kappa_l} R_\rho{}^\lam{}_{\mu\nu}\}\,, 
\quad l=0,1,\dots,r-1\,,
\end{gather*}
i.e.
$$
\Omega(\partial^{(r)}\Phi_k,\partial^{(r)}\Gamma)
= \overline\Omega(\nabla^{(r)}\Phi_k, \nabla^{(r)} T,
\nabla^{(r-1)} R).
$$
\end{theorem}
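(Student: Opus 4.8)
The plan is to invert the change-of-connection formulas~\eqref{Eq: 1.6}--\eqref{Eq: 1.8} and feed the results into the reduction already furnished by Theorem~\ref{Th: RT1}. By that theorem, $\Omega$ is an ordinary (pointwise, algebraic) concomitant of the symmetrized quantities $\widetilde\nabla^{(r)}\Phi_k$, $\widetilde\nabla^{(r)} T$ and $\widetilde\nabla^{(r-1)}\widetilde R$. Since a composition of algebraic expressions is again algebraic, it suffices to exhibit each of these as a polynomial built solely from the $\nabla$-quantities $\nabla^{(r)}\Phi_k$, $\nabla^{(r)} T$, $\nabla^{(r-1)} R$ permitted in the statement; substituting those polynomials into $\widetilde\Omega$ then produces the desired $\overline\Omega$.

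The inversion is immediate, because the correction terms on the right-hand sides of~\eqref{Eq: 1.6}--\eqref{Eq: 1.8} are already written in $\nabla$-quantities rather than in $\widetilde\nabla$-quantities, so one need only rearrange. Thus~\eqref{Eq: 1.6}, read at each order $j\le r$, gives $\widetilde\nabla^{j}\Phi_k$ as a polynomial in $\nabla^{j}\Phi_k$, $\nabla^{(j-1)}\Phi_k$ and $\nabla^{(j-1)}T$, so that $\widetilde\nabla^{(r)}\Phi_k$ is a polynomial in $\nabla^{(r)}\Phi_k$ and $\nabla^{(r-1)}T$. Identically,~\eqref{Eq: 1.7} turns $\widetilde\nabla^{(r)} T$ into a polynomial in $\nabla^{(r)} T$ alone. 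For the curvature I read~\eqref{Eq: 1.8} at orders $j\le r-1$, obtaining $\widetilde\nabla^{j}\widetilde R$ as a polynomial in $\nabla^{j} R$, $\nabla^{(j+1)}T$ and $\nabla^{(j-1)}R$; hence $\widetilde\nabla^{(r-1)}\widetilde R$ is a polynomial in $\nabla^{(r-1)} R$ and $\nabla^{(r)}T$.

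The only delicate point, and the place where the argument must be handled with care, is the order bookkeeping in the curvature step, since~\eqref{Eq: 1.8} raises the torsion by one order relative to the curvature. Reading it at the top order $j=r-1$ produces torsion derivatives only up to $\nabla^{(r)}T$ and curvature only up to $\nabla^{(r-2)}R$, both safely within the admissible range $\nabla^{(r)}T$, $\nabla^{(r-1)}R$. This matching is precisely why the theorem permits the torsion to enter one order higher than the curvature: had~\eqref{Eq: 1.8} been applied at order $r$, or had the admissible range for $T$ been capped at $r-1$, the substitution would overflow the allowed quantities.

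With the orders verified, substituting the three polynomial expressions into the ordinary concomitant $\widetilde\Omega$ yields an ordinary concomitant $\overline\Omega(\nabla^{(r)}\Phi_k,\nabla^{(r)} T,\nabla^{(r-1)} R)$, which is the assertion. Running the same rearrangement in the opposite direction recovers Theorem~\ref{Th: RT1} from Theorem~\ref{Th: RT2}, confirming that the two are genuinely equivalent reformulations.
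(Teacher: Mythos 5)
Your proposal is correct and takes essentially the same route as the paper: the paper obtains Theorem~\ref{Th: RT2} by combining Theorem~\ref{Th: RT1} with the relations~\eqref{Eq: 1.6}--\eqref{Eq: 1.8}, whose correction terms are deliberately written in $\nabla$-quantities so that the inversion you perform is immediate. Your order bookkeeping in the curvature step (torsion admitted up to order $r$, curvature up to order $r-1$) is exactly the point the paper's formulation encodes, so nothing is missing.
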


\begin{remark}{
We get, from Theorem~\ref{Th: RT1} and Theorem~\ref{Th: RT2}, that
$(\nabla^{(r)}\Phi_k,\nabla^{(r)} T, \nabla^{(r-1)} R)$ and
\hbox{$(\widetilde\nabla^{(r)}\Phi_k, \linebreak
  \widetilde\nabla^{(r)} T, \widetilde\nabla^{(r-1)}\widetilde R)$}
form two systems of generators of differential concomitants of order
$r$ of $\{\Phi_k\}_{k=1,\dots,s}$ and of the linear connection
$\Gamma^\lambda_{\mu\nu}$ with torsion (in order $r$).  These two
systems of generators satisfy different identities.  For the system
$(\nabla^{(r)}\Phi_k,\nabla^{(r-1)} T, \nabla^{(r-2)} R)$ we have the
Bianchi and the Ricci identities \eqref{Eq: 1st B.i.}, \eqref{Eq: 2nd
  B.i.} and \eqref{Eq: R.i.}  (and their covariant derivatives), while
for the system of generators
$(\widetilde\nabla^{(r)}\Phi_k,\widetilde\nabla^{(r-1)} T,
\widetilde\nabla^{(r-2)}\widetilde R)$ we have the Bianchi and the
Ricci identities (and their covariant derivatives) for torsion-free
connections recalled, for instance, in~\cite[Section~2]{janyska-markl}.

It follows from the Ricci identity that we can take also the {\em
  symmetrized\/} covariant derivatives
$(\overset{S}{\nabla}{}^{(r)}\Phi_k,\overset{S}{\nabla}{}^{(r)} T,
\overset{S}{\nabla}{}^{(r-1)} R)$ and
$(\overset{S}{\widetilde\nabla}{}^{(r)}\Phi_k,
\overset{S}{\widetilde\nabla}{}^{(r)} T,
\overset{S}{\widetilde\nabla}{}^{(r-1)}\widetilde R)$ as two different
bases of differential concomitants of order $r$.  The Bianchi-Ricci
identities for such symmetric bases are, however, quite involved.  We
will prove, in Theorem~C, that there are bases whose elements satisfy
the "ideal" Bianchi-Ricci identities (with vanishing right hand sides)
similar to the ideal Bianchi-Ricci identities for symmetric
connections, \cite{janyska-markl}.}
\end{remark}

\section{Main results}
\label{s2}

\begin{odstavec}
{\bf Operators we consider.}
\label{types}
Let $\Contor$ be the natural bundle functor of linear, not necessarily
torsion-free, connections~\cite[Section~17.7]{kolar-michor-slovak} and
$T$ the tangent bundle functor.  We will consider natural differential
operators $\gO : \Contor \times T^{\otimes d} \to T$ acting on a
linear connection and $d$ vector fields, $d \geq 0$, which are linear
in the vector field variables, and which have values in vector
fields. We will denote the space of natural operators of this type by
$\Nat(\Contor \times T^{\otimes d},T)$.

To make the formulation of the main results of this paper
self-consistent, we recall almost verbatim some definitions
of~\cite{janyska-markl}. 
Define the {\em vf-order\/} (vector-field order)\label{huhu}
resp.\ the {\em c-order\/} (connection order)\label{haha}
of a differential operator $\gO : \Contor
\times T^{\otimes d} \to T$ as the order of $\gO$ 
in the vector field variables, resp.\ the connection variable.
\end{odstavec}

\begin{odstavec}{\bf Traces.} 
\label{traces}
Let $\gO$ be an operator acting on vector fields $X_1,\ldots,X_d$ and
a connection $\Gamma$, with values in vector fields.  Suppose that
$\gO$ is a linear order $0$ differential operator in $X_i$ for some $1
\leq i \leq d$. This means that the local formula $O(\Gamma,\Rada
X1d)$ for $\gO$ is a linear function of the coordinates of $X_i$ and
does not contain derivatives of the coordinates of $X_i$. In this
situation we define $\Tr_i(\gO) \in \Nat(\Contor \times T^{\otimes (d-1)},R)$ 
as the operator with values in the bundle $R$ of
smooth functions given by the local formula
\begin{eqnarray*}
\lefteqn{
\Tr_i(O)(\Gamma,\Rada X1{i-1},\Rada X{i+1}d) :=} \hskip 1em
\\ 
&& \hskip 1em
\mbox{Trace}(O (\Gamma,\Rada X1{i-1},-,\Rada X{i+1}d) : \Rn \to \Rn).
\end{eqnarray*}
Whenever we
write $\Tr_i(\gO)$ we tacitly assume that the trace makes sense, 
i.e.~that $\gO$ is a linear order $0$ differential
operator in $X_i$. 
\end{odstavec}

\begin{odstavec}
{\bf Compositions.} 
\label{comp}
Let $\gO' : \Contor \times T^{\otimes d'} \to T$ and $\gO'' : \Contor \times
T^{\otimes d''} \to T$ be operators as in~\ref{types}. Assume that
$\gO'$ is a linear order $0$ differential operator in $X_i$ for some
$1 \leq i \leq d'$. In this situation we define the {\em composition\/} $\gO'
\circ_i \gO'' : \Contor \times T^{\otimes (d'+ d'' -1)} \to T$ as the
operator obtained by substituting the value of the operator $\gO''$
for the vector-field variable $X_i$ of $\gO'$. As in~\ref{traces}, by
writing $\gO' \circ_i \gO''$ we signal that $\gO'$ is of order $0$
in $X_i$.
\end{odstavec}

\begin{odstavec}
{\bf Iterations.}
\label{iter}
By an {\em iteration\/} of differential
operators we understand applying a finite number of the following
`elementary' operations:
\begin{itemize}
\item[(i)] permuting the vector-fields inputs of a differential operator $\gO$,

\item[(ii)] taking the pointwise linear
combination $k' \cdot \gO' + k'' \cdot \gO''$, $k',k''\in \mathbb{R}$,

\item[(iii)] performing the composition $\gO' \circ_i \gO''$, and

\item[(iv)] taking the pointwise product $\Tr_{i}(\gO') \cdot \gO''$.
\end{itemize}

There are `obvious' relations between the above operations. The
operations $\circ_i$ in (iii) satisfy the `operadic' associativity and
compatibility with permutations in~(i), see properties (1.9) and
(1.10) in~\cite[Definition~II.1.6]{markl-shnider-stasheff:book}. Other
`obvious' relations are the commutativity of the trace, $\Tr_j(\gO'
\circ_i \gO'') = \Tr_i(\gO''\circ_j \gO')$ and its `obvious'
compatibility with permutations of~(i).
\end{odstavec}

We denote, for each $n\geq 2$,
by $\tE^0(n)$ the induced representation
\[
\tE^0(n) :=
\mbox{\rm Ind}^{\Sigma_n}_{\Sigma_{n-2} \times \Sigma_2} 
({\mathbf 1}_{\Sigma_{n-2}}\otimes {\mathbb R}[\Sigma_{2}]),
\]
where ${\mathbb R}[\Sigma_{2}]$ is the regular representation of
$\Sigma_2$ and ${\mathbf 1}_{\Sigma_{n-2}}$ the trivial representations of the
symmetric group $\Sigma_{n-2}$.  The space $\tE^0(n)$ expresses the
symmetries of the derivative
\begin{equation}
\label{eq:1}
\frac{\pa^{n-2}\Gamma^\omega_{\rho_{n-1}\rho_n}}
{\prodrada{\pa x^{\rho_1}}{\pa x^{\rho_{n-2}}}}, \ n \geq 2,
\end{equation}
of the Christoffel symbol $\Gamma^\lambda_{\mu\nu}$, which is totally
symmetric in the first $(n-2)$ indexes but, unlike the torsion-free
case, {\em not\/} in the last two ones.  Elements of $\tE^0(n)$ are linear
combinations
\begin{equation}
\label{konec}
\sum_{\sigma \in \Sigma'_n} \alpha_\sigma 
\cdot (1_{n-2}\otimes \id_2) \sigma,
\end{equation}
where $1_{n-2}\otimes \id_2 \in {{\mathbf 1}}_{n-2}\otimes
\bbR[\Sigma_2]$ is the generator, $\alpha_\sigma \in {\mathbb R}$, and
$\sigma$ runs over the set $\Sigma'_n$ of all permutations $\sigma \in
\Sigma_n$ such that $\sigma(1) < \cdots <\sigma(n-2)$.  We also denote
$\tE^1(n)$ be the trivial $\Sigma_n$-module ${\mathbf 1}_n$ and by
\[
\vartheta_\tE : \tE^0(n) \to \tE^1(n)
\]
the equivariant map that sends the generator $1_{n-2} \otimes \id_2 \in {{\mathbf
1}}_{n-2}\otimes \bbR[\Sigma_2]$ to $-1_n \in {{\mathbf 1}}_n$.\label{a1}
Analogously to the torsion-free case discussed in~\cite{janyska-markl}, 
the leading terms of the basis tensors are
parametrized by a choice of generators for the kernel $\Kr(n) \subset
\tE^0(n)$ of the map $\vartheta_\tE$. 

The first main theorem of the paper reads:

\begin{theoremA}
\label{A}
Let $D^i_n(\Gamma,\Rada X1n)$, $(n,i) \in S := \{n \geq 2,\ 1 \leq i
\leq k_n\}$, be
differential operators in $\Nat(\Contor \times T^{\otimes n},T)$ whose
local expressions are
\begin{equation}
\label{oolong}
D_n^{i,\omega}\left(\Gamma^\lambda_{\mu\nu},
\rada{X^{\delta_1}_1}{X^{\delta_n}_n}\right) = \sum_{\sigma \in
  \Sigma'_n} 
\alpha^i_{n,\sigma} \cdot 
X^{\rho_1}_{\sigma(1)}\cdots X^{\rho_n}_{\sigma(n)} 
\frac{\pa^{n-2}\Gamma^\omega_{\rho_{n-1}\rho_n}}
{\prodrada{\pa x^{\rho_1}}{\pa x^{\rho_{n-2}}}} + \lot
\end{equation}
where $\lot$ is an expression of differential order $< n-2$, and
$\{\alpha^i_{n,\sigma}\}_{\sigma \in \Sigma'_n}$ are real constants such that the
elements 
\[
\sum_{\sigma \in \Sigma'_n} \alpha^i_{n,\sigma} 
\cdot (1_{n-2}\otimes \id_2) \sigma,\ 1 \leq i \leq k_n,
\]
generate the $\Sigma_n$-module $\Kr(n)$ for each $n \geq 2$.

Let moreover $V_n(\Gamma,\Rada X1n)$,
$n \geq 1$, be  differential operators in 
$\Nat(\Contor \times T^{\otimes n},T)$ of the form
\[
V_n^\omega\left(\Gamma^\lambda_{\mu\nu},
\rada{X^{\delta_1}_1}{X^{\delta_n}_n}\right) =
X^{\rho_1}_1\cdots X^{\rho_{n-1}}_{n-1}  
\frac{\pa^{n-1} X_n^{\omega_n}}
{\pa x^{\rho_1}\cdots{\pa x^{\rho_{n-1}}}
}
 + \lot,
\]
where $\lot$ is an expression of differential order 
$< n-1$.  

Suppose that the operators $D^i_n(\Gamma,\Rada X1n)$ are of
vf-order~$0$ and $V_n(\Gamma,\Rada X1n)$ of order $0$ in
$\Rada X1{n-1}$.  Then each differential operator $\gO : \Contor
\times T^{\otimes d} \to T$ is an iteration, in the sense
of~\ref{iter}, of some of the operators $\{D^i_n\}_{(n,i) \in S}$ and $\{V_n\}_{n \geq 1}$.
\end{theoremA}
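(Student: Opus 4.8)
The plan is to reduce $\gO$ to a combination of the proposed generators in three stages: first pass from the connection to its covariant curvature, torsion, and the covariant derivatives of the vector fields via the reduction theorem; then invoke invariant theory to express the result through tensor operations realizable by the iteration operations of~\ref{iter}; and finally show that each individual covariant-derivative generator is itself an iteration of the $\{D^i_n\}$ and $\{V_n\}$.

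First I would apply Theorem~\ref{Th: RT2} to write $\gO$ as an ordinary (pointwise, $GL$-equivariant) concomitant $\overline\Omega(\nabla^{(r)}X_1,\dots,\nabla^{(r)}X_d,\nabla^{(r)}T,\nabla^{(r-1)}R)$ of the iterated covariant derivatives of the input vector fields, of the torsion $T$, and of the curvature $R$. Since $\gO$ is linear in each $X_i$ and has values in vector fields, the first fundamental theorem of invariant theory for the general linear group expresses $\overline\Omega$ as a linear combination of complete index contractions, each involving exactly one factor $\nabla^{(a_i)}X_i$ per input, together with some factors $\nabla^{(b_j)}R$ and $\nabla^{(c_l)}T$, all indices paired off except for the single free upper index producing the resulting vector field. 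Each such contraction pattern is built from its factors by tensor products and traces, which are exactly the operations \ref{iter}(iii)--(iv) (composition and trace multiplication), combined with permutations \ref{iter}(i) and linear combinations \ref{iter}(ii). Hence it suffices to realize each atomic factor $\nabla^{(a)}X$, $\nabla^{(b)}R$ and $\nabla^{(c)}T$ as an iteration of the given operators.

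For the vector-field factors this is immediate: the leading symbol of $V_{a+1}$ is precisely the $a$-th iterated covariant derivative of $X_{a+1}$ contracted against $\Rada X1a$, so after matching this top-order term the difference has strictly smaller differential order in the vector-field slot and is handled by induction on that order, its lower-order corrections being themselves operators of the type already treated. The real work lies in the curvature and torsion factors. Here I would argue that, for fixed $n$, the leading symbols of the vf-order~$0$ natural operators built from $\nabla^{(b)}R$ and $\nabla^{(c)}T$ are parametrized exactly by the module $\tE^0(n)$ recording the symmetries~\eqref{eq:1} of the $(n-2)$-nd derivative of the Christoffel symbol, and that the admissible leading symbols --- those which genuinely occur at differential order $n-2$ rather than reducing to lower order --- form precisely the kernel $\Kr(n)=\Ker(\vartheta_\tE)$. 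Since the coefficients $\{\alpha^i_{n,\sigma}\}$ were chosen so that the leading terms of the $D^i_n$ generate $\Kr(n)$ as a $\Sigma_n$-module, permutations and linear combinations (\ref{iter}(i)--(ii)) of the $D^i_n$ produce any prescribed admissible leading symbol; subtracting it off lowers the connection order, and a downward induction on the c-order finishes the reduction.

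The main obstacle is the identification of $\Kr(n)$ with the space of admissible top-order symbols of the joint curvature--torsion system. In contrast with the torsion-free case, the last two lower indices of~\eqref{eq:1} are not symmetric, which is why $\tE^0(n)$ is induced from the regular representation $\mathbb{R}[\Sigma_2]$ rather than the trivial one; one must check that the Bianchi- and Ricci-type identities~\eqref{Eq: 1st B.i.}--\eqref{Eq: R.i.} satisfied by $\nabla^{(b)}R$ and $\nabla^{(c)}T$ cut out exactly the relations encoded by $\vartheta_\tE$, and that the fully symmetrized part (the image of $\vartheta_\tE$ in the trivial module $\tE^1(n)$) is precisely the combination forced by naturality to drop to lower order. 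Establishing this equivariant correspondence, together with the bookkeeping of the simultaneous induction on vf-order and c-order, is where the argument genuinely differs from the torsion-free treatment of~\cite{janyska-markl}; the remaining steps are formal consequences of the reduction theorem and of invariant theory.
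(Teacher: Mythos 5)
Your proposal is correct in outline, but it takes a genuinely different route from the paper. The paper's own proof of Theorem~A never invokes the reduction theorem: it runs entirely inside the graph complex, combining Theorem~\ref{Th2} (natural operators correspond to $H^0(\Gr^*(d),\delta)$), the bicomplex of Lemma~\ref{l22}, and the space $\Cont(d)$ of contraction schemes with the maps $\pi$ and $\Psi$ of~(\ref{diag}); the theorem follows once a right inverse $s$ of $\pi$ shows that $\Psi$ surjects onto the cocycles. You instead bootstrap from \L{}ubczonok's Theorem~\ref{Th: RT2} and the first fundamental theorem of invariant theory for $GL_n$ to write $\gO$ as a sum of complete contractions of factors $\nabla^{a}X_i$, $\nabla^{b}R$, $\nabla^{c}T$, and then convert each factor into an iteration by matching leading terms against $V_{a+1}$, respectively against $\Sigma_n$-combinations of the $D^i_n$, with a double induction on vf-order and c-order. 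This is viable: the spanning part of the fundamental theorem holds in all dimensions (consistent with Theorem~A carrying no dimension hypothesis), complete contractions are exactly what the operations~\ref{iter}(i)--(iv) produce given the order-$0$ hypotheses on the slots of the $D$'s and $V$'s, and the inductions terminate on jet order. The trade-off: your argument is classical and comparatively elementary, but it makes Theorem~A logically dependent on the quoted reduction theorem, whereas the paper's proof is independent of it (Theorems~\ref{t:1} and~\ref{t:2} are there deduced \emph{from} Theorems~A and~B, not used as input), and the graph-complex machinery it sets up is reused verbatim for Theorems~C and~D.

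One correction: the ``main obstacle'' in your final paragraph is misidentified and largely evaporates. Since $\tE^1(n)$ is the trivial module, $\Kr(n)=\Ker(\vartheta_\tE)$ is nothing but the condition that the coefficients sum to zero, cf.~(\ref{kolo}); the Bianchi--Ricci identities~(\ref{Eq: 1st B.i.})--(\ref{Eq: R.i.}) play no role in its description. Moreover, your argument does not need the characterization of admissible leading symbols as \emph{exactly} $\Kr(n)$: it only needs the easy inclusion that the leading terms $r_n$ and $t_n$ of $\nabla^{n-3}R$ and $\nabla^{n-2}T$ (the classical choice~\ref{a}) have vanishing coefficient sum, hence lie in $\Kr(n)$, hence, by the hypothesis of Theorem~A, in the $\Sigma_n$-module spanned by the leading terms of the $D^i_n$. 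The converse inclusion---that naturality forces every top symbol into $\Kr(n)$---is what the paper's horizontal cohomology computation encodes, and it is needed for the size statements (Theorems~B and~D), not for Theorem~A.
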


On manifolds of dimension $\geq 3$, each sequence
of operators that generates all operators in $\Nat(\Contor \times
T^{\otimes n},T)$ is of the form required by Theorem~A. We
leave the precise formulation of this 
modification of~\cite[Theorem~B]{janyska-markl} to the reader.  Let us
spell out two preferred choices of the leading term of the operators
$D^i_n(\Gamma,\Rada X1n)$ in Theorem~A.

\vskip .5em

\begin{odstavec}
{\bf The classical choice.}\label{a} 
In this case $k_2: = 1$ and $k_n : = 2$ for $n \geq 3$. We put, for $n
\geq 3$,
\begin{equation}
\label{eq:2}
r_n^\omega\left(\Gamma^\lambda_{\mu\nu},
\rada{X^{\delta_1}_1}{X^{\delta_n}_n}\right) :=
X^{\rho_1}_1\cdots X^{\rho_n}_n \frac{\pa^{n-3}}
{\prodrada{\pa x^{\rho_1}}{\pa x^{\rho_{n-3}}}}
\left(
\frac{\pa \Gamma^\omega_{\rho_{n-2}\rho_n}}{\pa x^{\rho_{n-1}}}
- \frac{\pa  \Gamma^\omega_{\rho_{n-1}\rho_n}}{\pa x^{\rho_{n-2}}}
\right)
\end{equation}
and, for $n \geq 2$, 
\begin{equation}
t^\omega_n\left(\Gamma^\lambda_{\mu\nu},
\rada{X^{\delta_1}_1}{X^{\delta_n}_n}\right) := 
X^{\rho_1}_1\cdots X^{\rho_n}_n \frac{\pa^{n-2}}
{\prodrada{\pa x^{\rho_1}}{\pa x^{\rho_{n-2}}}}
\left(
\Gamma^\omega_{\rho_{n-1}\rho_n}
- 
\Gamma^\omega_{\rho_{n}\rho_{n-1}}
\right).
\end{equation}
Then $t_2$ (resp.~$r_n$ and $t_n$ if $n\geq 3$) generate, in the sense
required by Theorem~A, the kernel $\Krtor(2)$
(resp.~$\Krtor(n)$). So any system of operators $D^1_n$ with the leading
term $t_n$, $n \geq 2$, and operators $D^2_n$ with the leading term
$r_n$, $n \geq 3$, satisfy the requirements of Theorem~A.

The reader certainly noticed that $r_n$'s (resp.~$t_n$'s) are the leading
terms of the iterated covariant derivatives of the curvature (resp.~the
torsion), see also Example~\ref{example1}. This explains why we called this
choice {\em classical\/}. The term $r_n$ has the following
symmetries:
\begin{itemize}
\item[(s1)] 
antisymmetry in $X_{n-2}$ and $X_{n-1}$,
\item[(s3)]
for $n \geq 4$, cyclic symmetry in $X_{n-3}$, $X_{n-2}$,
$X_{n-1}$, and
\item[(s4)]
for $n \geq 4$, total symmetry in $X_1,\ldots,X_{n-3}$,
\end{itemize}
so there is no symmetry (s2) of~\cite{janyska-markl} typical for the
torsion-free case. The term $t_n$~is
\begin{itemize}
\item[(t1)] 
antisymmetric in $X_{n-1}$ and $X_{n}$, and
\item[(t2)]
for $n \geq 3$, totally symmetric in $X_1,\ldots,X_{n-2}$.
\end{itemize}
The terms $r_n$ and $t_n$ are not independent but tied,
for $n \geq 3$, by the vanishing of the sum
\begin{equation}
\label{eq:3}
\cyclsumnic \left(\rule{0pt}{1em} r_n(\Gamma,\Rada
X1{n-3},X_a,X_b,X_c) + t_n(\Gamma,\Rada X1{n-3},X_a,X_b,X_c)\right) = 0,
\end{equation}
running over all cyclic permutations $\{a,b,c\}$ of the set
$\{{n-2},{n-1},{n}\}$. 
\end{odstavec}

\begin{odstavec}
{\bf The canonical choice.} Now $k_n := 1$ for all $n \geq 2$.  Let
$l^\omega_2(\Gamma) := \Gamma^\omega_{\rho_1\rho_2} -
\Gamma^\omega_{\rho_2\rho_1}$ and $l_n$ be, for $n
\geq 3$, given by the local formula 
\[
l_n^\omega\left(\Gamma^\lambda_{\mu\nu},
\rada{X^{\delta_1}_1}{X^{\delta_n}_n}\right) := 
X^{\rho_1}_1\cdots X^{\rho_n}_n \frac{\pa^{n-3}}
{\prodrada{\pa x^{\rho_1}}{\pa x^{\rho_{n-3}}}}
\left(6
\frac{\pa \Gamma^\omega_{\rho_{n-1}\rho_n}}{\pa x^{\rho_{n-2}}}
- \sum_{a,b,c}
\frac{\pa  \Gamma^\omega_{\rho_{a}\rho_b}}{\pa x^{\rho_{c}}}
\right)
\]
where $\{a,b,c\}$ runs over all permutations of
$\{\rho_{n-2},\rho_{n-1},\rho_n\}$.  We call the choice {\em
canonical\/} because it is given by the canonical
$\Sigma_n$-equivariant projection of $\tE^0(n) = \Krtor(n) \oplus {\mathbf
1}_n$ onto $\Krtor(n)$.  The system $\{l_n\}_{n \geq 2}$ enjoys the
following symmetries:
\begin{itemize}
\item[(l1)] 
$l_2(\Gamma,X_1,X_2)$ is antisymmetric in $X_1,X_2$ and, for $n \geq 3$, 
\[
\sum_\omega l_n(\Gamma,\Rada X1{n-3},X_{\omega(n-2)},X_{\omega(n-1)},X_{\omega(n)}) = 0, 
\] 
with the sum over all permutations $\omega$ of 
$\{{n-2},{n-1},n\}$,
\item[(l2)] 
for $n \geq 3$, total symmetry in $X_1,\ldots,X_{n-3}$,
\item[(l3)]
for $n \geq 4$, 
\[
\sum_\omega (-1)^{\sgn(\omega)} \cdot l_n(\Gamma,
\Rada X1{n-4},X_{\omega(n-3)},X_{\omega(n-2)},X_{\omega(n-1)},X_n) = 0,
\]
where $\omega$ runs over all permutations of $\{n-3,n-2,n-1\}$, and
\item[(l4)]
for $n \geq 4$,
\[
\sum_{\tau,\lambda} (-1)^{\sgn(\tau) + \sgn(\lambda)}\cdot 
l_n(\Gamma,\Rada X1{n-4},X_{\tau(n-3)},X_{\tau(n-2)},X_{\lambda(n-1)},X_{\lambda(n)}) = 0, 
\] 
with the sum over all permutations $\tau$ (resp.~$\lambda$)
of $\{n-3,n-2\}$ (resp.~of $\{n-1,n\}$).
\end{itemize}
\label{b}
\end{odstavec}

The following theorem specifies more precisely which of the basis
operators may appear in the iterative representation of operators
$\Contor \times T^{\otimes d} \to T$.

\begin{theoremB}
Assume that $\dim(M) \geq 2d-1$ and that $\{D^i_n\}_{(n,i)\in S}$, 
$\{V_n\}_{n \geq 1}$ be as in Theorem~A.
Let $\gO : \Contor \times T^{\otimes d} \to T$ be a differential operator
of the vf-order  $a \geq 0$. Then it has an iterative representation
with the following property. Suppose that an 
additive factor of this iterative 
representation of $\gO$ via $\{D^i_n\}_{(n,i) \in S}$ and
$\{V_n\}_{n \geq 2}$ contains
$\rada {V_{q_1}}{V_{q_t}}$, for some $\Rada q1t \geq 2$,  
$t \geq 0$. Then
\[
q_1+\cdots +q_t \leq a +t.
\]
In particular, if $\gO$ is of vf-order~$0$, it has an 
iterative representation that uses only $\{D_n\}_{(n,i) \in S}$.
\end{theoremB}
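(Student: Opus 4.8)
The plan is to make the \vforder\ into a bookkeeping device and show it is essentially additive along the constructions of~\ref{iter}. Concretely, one checks that permuting inputs~(i) preserves the \vforder, that a linear combination~(ii) has \vforder\ at most the maximum of the \vforder s of its summands, and that both the composition $\gO'\circ_i\gO''$ of~(iii) and the trace--product $\Tr_i(\gO')\cdot\gO''$ of~(iv) have \vforder\ equal to $\vforder(\gO')+\vforder(\gO'')$. The additivity in~(iii) and~(iv) holds precisely because $\gO'$ is of order $0$ in the slot being filled (resp.\ traced): the derivatives coming from $\gO'$ and those produced inside $\gO''$ act on disjoint sets of vector-field variables and cannot interfere, so the top-order contributions simply add. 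Since $\vforder(D^i_n)=0$ by hypothesis and, by the shape of its leading term, $\vforder(V_q)=q-1$, a single monomial (an iteration not involving linear combinations) built from some $D^i_n$'s and from $V_{q_1},\dots,V_{q_t}$ has \vforder\ equal to $\sum_{j=1}^t(q_j-1)=(q_1+\cdots+q_t)-t$, provided its top-order part does not cancel.

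With this in hand I would argue by induction on $a=\vforder(\gO)$. In the base case $a=0$ the degree count forces $\sum_j(q_j-1)=0$, hence no factor with $q_j\ge 2$ can occur, which is exactly the ``in particular'' assertion. For the inductive step, Theorem~A (available since $\dim(M)\ge 2d-1$) furnishes \emph{some} iterative representation $\gO=\sum_m c_m\,m$. The leading vf-symbol of $\gO$ is a genuine equivariant object of \vforder\ exactly $a$, and passing to the associated graded of the \vforder\ filtration replaces each $V_q$ by its vf-homogeneous leading symbol $X_1^{\rho_1}\cdots X_{q-1}^{\rho_{q-1}}\,\pa^{q-1}X_q^{\omega}/(\pa x^{\rho_1}\cdots\pa x^{\rho_{q-1}})$ and each $D^i_n$ by itself. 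On these symbols the operations~(i)--(iv) are \emph{strictly} graded, so the degree-$a$ part of the representation already reproduces the leading symbol of $\gO$ and is a sum of symbol-monomials each satisfying $\sum_j(q_j-1)=a$, i.e.\ $q_1+\cdots+q_t=a+t$.

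To finish I would lift these symbol-monomials back to honest iterations $\gO_a$ of the actual operators $\{D^i_n\}$ and $\{V_q\}$. By additivity the top-order part of $\gO_a$ equals the leading symbol of $\gO$, so $\gO-\gO_a$ is a natural operator of \vforder\ strictly less than $a$; the inductive hypothesis yields a representation of $\gO-\gO_a$ all of whose additive factors obey $q_1+\cdots+q_t\le(a-1)+t\le a+t$. Collecting the contributions of $\gO_a$ (with equality $q_1+\cdots+q_t=a+t$) and of $\gO-\gO_a$ gives the asserted representation of $\gO$.

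The main obstacle is exactly this passage to the associated graded, and it is where the torsion case demands genuine care: the generators $V_q$ are \emph{not} vf-homogeneous, their lower-order corrections (the \lot\ in Theorem~A) being connection-dependent, so one cannot naively ``discard the high-degree monomials'' of a given representation and hope the survivors are still admissible iterations. The real content is a graded refinement of Theorem~A, namely that the leading vf-symbols of $\{D^i_n\}$ and $\{V_q\}$ generate the associated graded under~(i)--(iv) and that each symbol-level monomial lifts to a true iteration whose top-order part is the prescribed symbol; the downward induction on \vforder\ then absorbs all the lower-order discrepancies. The hypothesis $\dim(M)\ge 2d-1$ is used to rule out exceptional low-dimensional identities among the generators that would otherwise corrupt this degree bookkeeping.
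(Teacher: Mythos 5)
Your outline reduces Theorem~B to the right bookkeeping (all additive factors should have formal degree $\sum_j(q_j-1)\leq a$), but the engine of your induction --- what you call the ``graded refinement of Theorem~A'' --- is named, not proved, and without it neither the base case nor the inductive step stands. For $a=0$, Theorem~A supplies only \emph{some} iterative representation; it may perfectly well contain factors $V_q$, $q\geq 2$, whose top-order contributions cancel against one another, and no ``degree count'' excludes this: excluding it \emph{is} the assertion being proved. In the inductive step, the claim that ``the degree-$a$ part of the representation already reproduces the leading symbol of $\gO$'' is false as stated: monomials of formal degree $>a$ must indeed have cancelling top symbols, but the differences $m-(\mbox{top part of }m)$ are not symbol-monomials --- they are connection-dependent lower-order expressions guaranteed by nothing to be iterations --- and they do contribute to the order-$a$ part of $\gO$. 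Hence the degree-$a$ symbol-monomials occurring in the given representation need not sum to the leading term of $\gO$, and your lift $\gO_a$ need not exist inside the span of degree-$a$ monomials. What you would need is precisely: every linear relation among top symbols of monomials lifts to a rewriting of the corresponding combination of honest monomials as an iteration of strictly smaller formal degree. That lemma is the whole difficulty, and it is absent.

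The paper's proof is structured so that this cancellation problem never arises; in particular it is not an induction on $a$. The formal vf-order is defined combinatorially on three spaces at once --- on $\Gr[\Kr](d)$, on contraction schemes in $\Cont(d)$, and on $\Gr^0(d)$ --- by letting the vertices $b_n$ and $v_n$ contribute $n$ and all other vertices ($\xi^i_n$, $d^i_n$, $\nabla$-vertices) contribute $0$; with this definition the inequality of Theorem~B holds for a contraction scheme \emph{by definition} of its vf-order. The substantive input, imported from the torsion-free paper, is that the section $s:\Gr[\Kr](d)\to\Cont(d)$ of $\pi$ and the map $\beta=\Psi\circ s$ constructed in the proof of Theorem~A both \emph{preserve} the vf-order, and that $\beta$ is an isomorphism onto $H^0(\Gr^*(d),\delta)\cong\Nat(\Contor\times T^{\otimes d},T)$ once $\dim(M)\geq 2d-1$. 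Given $\gO$ represented by a cocycle $c$, the scheme $C:=s(\beta^{-1}(c))$ is then, in one stroke, an iterative representation of $\gO$ with $\vforder(C)=\vforder(\gO)=a$, and the bound follows immediately. In effect, the order-preservation of $s$ and $\beta$ is the rigorous form of your missing ``graded refinement''; if you wish to complete your route, that is the statement you must prove, and it is exactly where the graph-complex machinery and the stability hypothesis on $\dim(M)$ enter.
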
 
 
Theorem~B implies the following two `reduction' theorems.
The first one uses the `classical' choice of the generators of the
kernels $\Krtor(n)$, $n \geq 2$.

\begin{theorem}
\label{t:1}
Let $R_n$, $n \geq 3$, be operators of the form
\[
R_n^\omega\left(\Gamma^\lambda_{\mu\nu},
\rada{X^{\delta_1}_1}{X^{\delta_n}_n}\right)
=
X^{\rho_1}_1\cdots X^{\rho_n}_n \frac{\pa^{n-3}}
{\prodrada{\pa x^{\rho_1}}{\pa x^{\rho_{n-3}}}}
\left(
\frac{\pa \Gamma^\omega_{\rho_{n-2}\rho_n}}{\pa x^{\rho_{n-1}}}
- \frac{\pa  \Gamma^\omega_{\rho_{n-1}\rho_n}}{\pa x^{\rho_{n-2}}}
\right) + \lot
\]
and  $T_n$, $n \geq 2$, operators of the form
\[
T_n^\omega\left(\Gamma^\lambda_{\mu\nu},
\rada{X^{\delta_1}_1}{X^{\delta_n}_n}\right)
=
X^{\rho_1}_1\cdots X^{\rho_n}_n \frac{\pa^{n-2}}
{\prodrada{\pa x^{\rho_1}}{\pa x^{\rho_{n-2}}}}
\left(
\Gamma^\omega_{\rho_{n-1}\rho_n}
- 
\Gamma^\omega_{\rho_{n}\rho_{n-1}}
\right) + \lot
\]
If $\dim(M) \geq 2d-1$, the all differential concomitants $\gO :
\Contor \times T^{\otimes d} \to T$ of the connection
$\Gamma^\kappa_{\mu\nu}$ (i.e.~operators of the vf-order $0$) are
ordinary concomitants of $\{R_n\}_{n \geq 3}$ and
$\{T_n\}_{n \geq 2}$.
\end{theorem}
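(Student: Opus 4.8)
The plan is to recognize $\{R_n\}_{n\geq 3}$ and $\{T_n\}_{n \geq 2}$ as the \emph{classical choice} of generators described in~\ref{a}, and then to read off the conclusion from Theorem~B. First I would note that, by construction, the leading term of $R_n$ is exactly $r_n$ (for $n \geq 3$) and the leading term of $T_n$ is exactly $t_n$ (for $n \geq 2$). By the discussion in~\ref{a}, $t_2$ generates $\Krtor(2)$, while $r_n$ and $t_n$ together generate $\Krtor(n)$ for $n \geq 3$, in precisely the sense demanded by Theorem~A. Hence, setting $k_2 := 1$ and $k_n := 2$ for $n \geq 3$, and putting $D^1_n := T_n$ and $D^2_n := R_n$, the family $\{D^i_n\}_{(n,i)\in S}$ satisfies the hypotheses of Theorem~A. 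All of these operators are of vf-order~$0$, being (up to lower-order corrections) the iterated covariant derivatives of the curvature and torsion tensors and therefore tensorial, i.e.~algebraic, in the vector-field arguments.

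Next I would apply Theorem~B to the given operator $\gO$. By assumption $\gO$ is of vf-order~$0$ and $\dim(M) \geq 2d-1$, so the final clause of Theorem~B supplies an iterative representation of $\gO$, in the sense of~\ref{iter}, that uses only the operators $\{D^i_n\}_{(n,i)\in S} = \{R_n\}_{n \geq 3} \cup \{T_n\}_{n \geq 2}$; in particular no operator $V_n$ occurs.

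Finally I would translate this $V$-free iterative representation into the language of ordinary concomitants. The key observation is that an iteration assembled solely from vf-order~$0$ operators by the operations~(i)--(iv) of~\ref{iter} introduces no further differentiation of the vector fields: each composition $\circ_i$ substitutes a vector-field-valued output into a slot in which the receiving operator has order~$0$, and each product $\Tr_i(\gO')\cdot \gO''$ is pointwise. Consequently the resulting expression for $\gO$ is a purely algebraic combination—contractions, symmetrizations and products—of the components of the $R_n$, the $T_n$ and the input vector fields, which is exactly what it means for $\gO$ to be an ordinary concomitant of $\{R_n\}_{n\geq 3}$ and $\{T_n\}_{n \geq 2}$.

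The substance of the argument rests entirely on Theorems~A and~B, which I take as given; the only genuinely new verifications are that the prescribed leading terms $r_n$, $t_n$ generate the kernels $\Krtor(n)$ (settled in~\ref{a}) and the bookkeeping that identifies a $V$-free iteration of vf-order~$0$ operators with an ordinary concomitant. I expect this vf-order bookkeeping to be the only delicate point, since one must confirm that the lower-order terms $\lot$ hidden in $R_n$ and $T_n$ do not covertly raise the order in the vector-field variables; this is guaranteed by our standing assumption that $R_n$ and $T_n$ are themselves of vf-order~$0$.
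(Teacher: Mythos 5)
Your proposal is correct and is essentially the paper's own argument: the paper obtains this theorem as an immediate consequence of Theorem~B specialized to the `classical' choice of generators of the kernels $\Kr(n)$ (namely $t_2$ for $n=2$ and $r_n$, $t_n$ for $n\geq 3$) described in Subsection~\ref{a}, which is exactly your identification $D^1_n:=T_n$, $D^2_n:=R_n$. The only point the paper leaves implicit---and which you spell out---is the bookkeeping that a $V$-free iteration of vf-order~$0$ operators is an ordinary (algebraic) concomitant of those operators.
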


The `canonical' choice of the generators of the kernels $\Krtor(n)$ leads to

\begin{theorem}
\label{t:2}
Let $L_2(X_1,X_2) :=  T(X_1,X_2)$ be the torsion and
$L_n$, for $n \geq 3$, be operators of the form
\[
L_n^\omega\left(\Gamma^\lambda_{\mu\nu},
\rada{X^{\delta_1}_1}{X^{\delta_n}_n}\right)
=
X^{\rho_1}_1\cdots X^{\rho_n}_n \frac{\pa^{n-3}}
{\prodrada{\pa x^{\rho_1}}{\pa x^{\rho_{n-3}}}}
\left(6
\frac{\pa \Gamma^\omega_{\rho_{n-1}\rho_n}}{\pa x^{\rho_{n-2}}}
- \sum_{a,b,c}
\frac{\pa  \Gamma^\omega_{\rho_{a}\rho_b}}{\pa x^{\rho_{c}}}
\right) + \lot,
\]
where the sum runs over all permutations $\{a,b,c\}$ of
$\{{n-2},{n-1},{n}\}$. 
If $\dim(M) \geq 2d-1$, then all differential concomitants $\gO :
\Contor \times T^{\otimes d} \to T$  of the connection
$\Gamma^\kappa_{\mu\nu}$ are ordinary concomitants of the tensors
$\{L_n\}_{n \geq 2}$.
\end{theorem}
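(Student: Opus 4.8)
The plan is to read Theorem~\ref{t:2} as the specialization of Theorem~B to the \emph{canonical} choice of generators of \ref{b}, followed by the translation of the resulting iterative representation into an ordinary, non-differential concomitant. No new analytic input is needed beyond Theorems~A and~B; the argument is entirely a matter of matching hypotheses.

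First I would verify that the family $\{L_n\}_{n\ge 2}$, with $L_2 := T$, meets the requirements imposed on the operators $\{D^i_n\}_{(n,i)\in S}$ for the canonical choice, where $k_n := 1$ for all $n\ge 2$, so that $S = \{(n,1) : n\ge 2\}$ and there is a single family $D^1_n = L_n$. Concretely, the leading term of $L_n$ is the operator $l_n$ of \ref{b}, whose associated element $\sum_{\sigma\in\Sigma'_n}\alpha_{n,\sigma}\,(1_{n-2}\otimes\id_2)\sigma$ is, by its very definition, the image of the generator under the canonical $\Sigma_n$-equivariant projection $\tE^0(n) = \Krtor(n)\oplus{\mathbf 1}_n \to \Krtor(n)$; hence it generates $\Krtor(n) = \Kr(n)$ in the sense required by Theorem~A. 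One also checks that each $L_n$ is of vf-order~$0$: its leading term is the product $X^{\rho_1}_1\cdots X^{\rho_n}_n$ against a derivative of the Christoffel symbols, which is of order $0$ in every vector-field slot, and the $\lot$ are by hypothesis of strictly lower differential order. For $n=2$ this is the familiar fact that the torsion $T$ is an order-$0$ concomitant with leading term $\Gamma^\omega_{\rho_1\rho_2}-\Gamma^\omega_{\rho_2\rho_1}$. Thus $\{L_n\}_{n\ge 2}$ is a legitimate instance of $\{D^1_n\}$ in Theorems~A and~B.

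Next I would invoke Theorem~B with this choice. A differential concomitant $\gO:\Contor\times T^{\otimes d}\to T$ of the connection alone is precisely an operator of vf-order~$0$, as the parenthetical remark in Theorem~\ref{t:1} already records. The standing hypothesis $\dim(M)\ge 2d-1$ is exactly the one demanded by Theorem~B, so its conclusion applies; in particular its final clause guarantees that a vf-order~$0$ operator admits an iterative representation using only the operators $\{D_n\}_{(n,i)\in S}$, which under the canonical choice is the single family $\{L_n\}_{n\ge 2}$, with no factors $V_n$ occurring. It then remains to translate this representation into the language of ordinary concomitants: because every $L_n$ is of vf-order~$0$, each elementary operation of \ref{iter} applied to the $L_n$ — permutation of inputs, real-linear combination, composition $\circ_i$, and the trace-product $\Tr_i(\cdot)\cdot(\cdot)$ — introduces no new differentiation and is purely tensorial, being substitution of one order-$0$ tensor field into a slot, contraction, or pointwise multiplication. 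Consequently the iteration furnished by Theorem~B is an ordinary concomitant of $\{L_n\}_{n\ge 2}$, which is the assertion.

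I expect the only genuine obstacle to sit in the second paragraph, namely in confirming that the explicit $l_n$ really coincides with the canonical projection of the generator onto $\Krtor(n)$ and therefore generates it. This is a finite $\Sigma_n$-representation-theoretic check rather than an analytic difficulty, and the decomposition $\tE^0(n)=\Krtor(n)\oplus{\mathbf 1}_n$ together with the formula for $l_n$ is already laid out in \ref{b}; everything downstream is formal once Theorems~A and~B are in hand.
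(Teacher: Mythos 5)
Your proposal is correct and follows essentially the same route as the paper, which derives Theorem~\ref{t:2} precisely as the specialization of Theorem~B (via Theorem~A) to the canonical choice of generators of $\Krtor(n)$ described in~\ref{b}, the vf-order-$0$ clause of Theorem~B eliminating the $V_n$'s. The checks you single out -- that $l_n$, being the canonical equivariant projection of the generator of $\tE^0(n)$, generates $\Krtor(n)$, and that iterations of vf-order-$0$ operators involve no differentiation and hence give ordinary concomitants -- are exactly the details the paper leaves implicit.
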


\begin{example}
\label{example1}
Tensors required by the above theorems (and therefore also by
Theorem~A) exist. One may, for instance, take
\begin{equation}
\label{j}
\def\arraystretch{1.4}
\begin{array}{rl}
R_n(\Gamma,\Rada X1n) & \hskip -.5em :=
(\nabla^{n-3} R)({X_1}, \cdots ,X_{n-3})(X_{n-2},X_{n-1})(X_n),\ n
\geq 3,\ \mbox { and}
\\
T_n(\Gamma,\Rada X1n) & \hskip -.5em  := 
(\nabla^{n-2} T)({X_1}, \cdots ,X_{n-2})(X_{n-1},X_n),\ n \geq 2,
\end{array}
\end{equation}
where $R$ and $T$ are the curvature and torsion tensors, respectively.
For the operators $L_n$, $n \geq 3$, in Theorem~\ref{t:2}, one can take
\begin{align}
\nonumber 
L_n(\Gamma,\Rada X1n):= &-3R_n(\Gamma,\Rada X1n) - 
R_n(\Gamma,\Rada X1{n-3},X_{n-1},X_n,X_{n-2})
\\
\label{JJ}
&+R_n(\Gamma,\Rada X1{n-3},X_{n},X_{n-2},X_{n-1}) + 2T_n(\Gamma,\Rada X1n)
\\
\nonumber 
& -2T_n(\Gamma,\Rada X1{n-3},X_{n-1},X_n,X_{n-2}).
\end{align}
where $T_n$ and $R_n$ are as in~(\ref{j}). 
\end{example}

Observe that, while the choice~(\ref{j}) in Theorem~\ref{t:1}
represents operators via the iterated covariant derivatives of both the
curvature {\em and\/} the torsion, the choice~(\ref{JJ}) in
Theorem~\ref{t:2} packs both series into one. Recall the following
important definition of~\cite{janyska-markl}.

\begin{definition}
\label{debra}
We say that $\frakS \in \RRR [\Sigma_n]$ is a {\em quasi-symmetry\/} of
an operator $D^i_n$ in~(\ref{oolong}) if
\[
(\sum_{\sigma \in \Sigma_n} \alpha^i_{n,\sigma} \sigma )\frakS = 0 
\]
in the group ring $ \RRR [\Sigma_n]$.
We say that  $\frakS$ is a {\em symmetry\/} of $D^i_n$ if $D^i_n \frakS = 0$.
\end{definition}

A  quasi-symmetry  $\frakS$ of $D^i_n$, by definition, annihilates its 
leading term, therefore $D^i_n \frakS$ is an operator of
c-order $\leq (n-3)$ that does not use the derivatives of the vector
field variables. We can express this fact by writing
\begin{equation}
\label{dev}
D^i_n \frakS(\Gamma,X_1,\ldots,X_n) = 
\deviace^{i,\frakS}_n(\Gamma,X_1,\ldots,X_n),
\end{equation}
where $\deviace^{i,\frakS}_n \in \Nat(\Con \times T^{\otimes n},T)$
($\deviace$ abbreviating ``deviation'') is a degree $\leq n-3$
operator which is, by Theorem~B, an iteration of the operators $D^i_u$
with $2 \leq u \leq n-1$ (no $V_n$'s). By definition, $\frakS$ is a
symmetry of $D^i_n$ if and only if $\deviace^{i,\frakS}_n =0$.  We
explained in~\cite{janyska-markl} that~(\ref{dev}) offers a conceptual
explanation of the Bianchi and Ricci identities. As in the
torsion-free case, one can prove that the iterative presentation of
Theorem~A is unique up to the quasi-symmetries and the `obvious'
relations, see~\cite[Theorem~D]{janyska-markl} for a precise
formulation. The following theorem guarantees the existence of
``ideal'' tensors.

\begin{theoremC}
For each choice of the leading terms
\begin{equation}
\label{psano_v_Soulu}
\sum_{\sigma \in \Sigma'_n} \alpha^i_{n,\sigma} \cdot
X^{\rho_1}_{\sigma(1)}\cdots X^{\rho_n}_{\sigma(n)}
\frac{\pa^{n-2}\Gamma^\omega_{\rho_{n-1}\rho_n}} {\prodrada{\pa
x^{\rho_1}}{\pa x^{\rho_{n-2}}}},\ (n,i) \in S,
\end{equation}
where $S$ is of the same form as in Theorem~A, such that
\begin{equation}
\label{kolo}
\sum_{\sigma \in \Sigma'_n} \alpha^i_{n,\sigma}  =0
\end{equation}
for each $(n,i) \in S$, there exist `ideal' operators
$\{J^i_n\}_{(n,i) \in S}$ as in~(\ref{oolong}), for which all the
``generalized'' Bianchi-Ricci identities~(\ref{dev}) are satisfied
without the right hand sides. In other words, all quasi-symmetries, in
the sense of Definition~\ref{debra}, are actual symmetries of the
operators $\{J^i_n\}_{(n,i) \in S}$.
\end{theoremC}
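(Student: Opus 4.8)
The plan is to build each ideal operator $J^i_n$ from a given operator $D^i_n$ of the form~\eqref{oolong} by a single correction of its lower-order terms, where the existence of that correction is forced by the semisimplicity of the group algebra $\RRR[\Sigma_n]$. The construction can be carried out independently for each $(n,i)\in S$, so no induction on $n$ is needed.

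First I would fix $(n,i)\in S$. Since the coefficients obey~\eqref{kolo}, the prescribed leading term lies in $\Kr(n)$, so Theorem~A supplies at least one operator $D^i_n\in\Nat(\Contor\times T^{\otimes n},T)$ of the form~\eqref{oolong} with that leading term. Write $A^i_n:=\sum_{\sigma\in\Sigma_n}\alpha^i_{n,\sigma}\sigma\in\RRR[\Sigma_n]$ for the group-ring element of Definition~\ref{debra}, and let $Q^i_n:=\{\frakS\in\RRR[\Sigma_n]\colon A^i_n\frakS=0\}$ be the set of quasi-symmetries. From $A^i_n(\frakS\tau)=(A^i_n\frakS)\tau$ it follows that $Q^i_n$ is a right ideal. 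The operators of vf-order~$0$ and c-order $\le n-3$ on $n$ vector fields form a right $\RRR[\Sigma_n]$-module $N$ under the permutation action of~\ref{iter}(i), since permutation of the inputs preserves both orders.

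The key observation is that $\deviace\colon Q^i_n\to N$, $\frakS\mapsto D^i_n\frakS=\deviace^{i,\frakS}_n$, is a homomorphism of right $\RRR[\Sigma_n]$-modules. It lands in $N$ because a quasi-symmetry annihilates the leading term, whence $D^i_n\frakS$ has c-order $\le n-3$ and uses no derivatives of the vector fields, exactly as recorded in the discussion preceding~\eqref{dev}; equivariance $\deviace(\frakS\tau)=\deviace(\frakS)\tau$ is the operadic associativity and permutation-compatibility of~\ref{iter}. Now $\RRR[\Sigma_n]$ is semisimple by Maschke's theorem, so the inclusion $Q^i_n\hookrightarrow\RRR[\Sigma_n]$ splits; fix a right-module projection $\pi\colon\RRR[\Sigma_n]\to Q^i_n$. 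Then $\deviace\circ\pi$ is a right-module map out of the free rank-one module, hence is right multiplication by the single operator $c:=\deviace(\pi(e))=D^i_n\,\pi(e)\in N$, where $e$ is the unit. For every $\frakS\in Q^i_n$ one has $\pi(\frakS)=\frakS$, so $c\,\frakS=\deviace(\pi(\frakS))=D^i_n\frakS$. I would then set $J^i_n:=D^i_n-c=D^i_n\,(e-\pi(e))$. Because $c$ has c-order $\le n-3<n-2$, the correction leaves the leading term untouched, so $J^i_n$ is again of the form~\eqref{oolong} with the same coefficients $\alpha^i_{n,\sigma}$, hence the same quasi-symmetries $Q^i_n$; and $J^i_n\frakS=D^i_n\frakS-c\,\frakS=0$ for every $\frakS\in Q^i_n$. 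Thus every quasi-symmetry of $J^i_n$ is a genuine symmetry and all identities~\eqref{dev} hold with vanishing right-hand sides.

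The only genuinely delicate point is the verification that $\deviace$ is a well-defined module map into $N$: one must keep careful track of the two layers, the group ring $\RRR[\Sigma_n]$ in which quasi-symmetries are defined and the symbol module $\tE^0(n)$ through which the c-order is detected, and confirm that a group-ring quasi-symmetry really forces the full top-order part of $D^i_n\frakS$ to cancel, not merely the contribution of the representatives $\sigma\in\Sigma'_n$. Once the module structure and this order drop are in place, semisimplicity does all the remaining work at once, killing the deviations attached to every quasi-symmetry simultaneously; this simultaneity is precisely what an $\frakS$-by-$\frakS$ argument could not guarantee and is the whole content of the theorem.
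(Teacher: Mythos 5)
Your construction is correct, and it takes a genuinely different route from the paper's. The paper proves Theorem~C inside its graph complex: it considers the bicomplex $\Gr^*(n)_n$ of graphs with $n$ distinguished black vertices, on which $\Sigma_n$ acts by permuting the labels, and applies Proposition~\ref{propo} (a $G$-equivariant splitting for bicomplexes of reductive $G$-modules) to extend the horizontal cocycle $\xi^i_n$ of~(\ref{xi}) to a total cocycle $\varsigma^i_n=\xi^i_n+\lot$ (Proposition~\ref{eqv}); linearity and $\Sigma_n$-equivariance of the extension map $\beta$ give $\varsigma^i_n\frakS=\beta(\xi^i_n)\frakS=\beta(\xi^i_n\frakS)=0$ for every $\frakS$ annihilating the leading term, and Theorem~\ref{Th2} then converts these cocycles into the ideal operators. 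You instead stay entirely inside $\Nat(\Contor\times T^{\otimes n},T)$: the quasi-symmetries form a right ideal $Q^i_n\subseteq\RRR[\Sigma_n]$, Maschke's theorem yields an idempotent $f=\pi(e)$ with $Q^i_n=f\cdot\RRR[\Sigma_n]$, and $J^i_n:=D^i_n(e-f)$ works because $(e-f)\frakS=0$ for every $\frakS\in Q^i_n$, while the correction $D^i_nf$ is a deviation of c-order $\le n-3$ and vf-order~$0$, hence invisible in the leading term. Both arguments are powered by semisimplicity of $\RRR[\Sigma_n]$: the paper uses it at the level of the graph bicomplex, you use it in the group algebra itself. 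Your route is shorter and independent of the graph-complex machinery, but it requires as input what the cocycle method manufactures from scratch, namely one operator realizing the prescribed leading term. (Incidentally, the paper's construction gives slightly more than Theorem~C asserts: every $\frakS$ killing the leading term in $\tE^0(n)$, not only every group-ring quasi-symmetry in the sense of Definition~\ref{debra}, becomes a symmetry of $\varsigma^i_n$; your $J^i_n$ is guaranteed to kill only $Q^i_n$, which is exactly what the theorem asks for.)

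Two smaller points. First, your existence step is mis-cited: Theorem~A is a generation statement, not an existence statement, and does not ``supply'' an operator with a prescribed leading term. The fact you need is nevertheless available in the paper: by the remark following Theorem~C, condition~(\ref{kolo}) places the prescribed element in $\Kr(n)$; by Subsection~\ref{a} the classical elements $t_n,r_n$ generate $\Kr(n)$ as a $\Sigma_n$-module; hence a suitable linear combination of permuted copies of the operators $T_n$ and $R_n$ of Example~\ref{example1} is a vf-order~$0$ operator with the required leading term. Second, the ``delicate point'' you flag at the end is immediate in the direction you actually use it: if $A^i_n\frakS=0$ in $\RRR[\Sigma_n]$, then the top-order part of $D^i_n\frakS$, being the image of $A^i_n\frakS$ under a linear map that factors through $\tE^0(n)$, vanishes identically, so $D^i_n\frakS$ drops to c-order $\le n-3$ with no vector-field derivatives; this is precisely the paper's observation preceding~(\ref{dev}), which you may cite as it stands.
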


Observe that~(\ref{kolo}) means that $\sum_{\sigma \in \Sigma'_n}
\alpha^i_{n,\sigma} \cdot (1_{n-2}\otimes \id_2) \sigma$ belongs to
the kernel $\Krtor(n)$, but, in contrast to Theorem~A, we do not
assume that the elements corresponding to~(\ref{psano_v_Soulu})
generate the kernel.

\noindent 
{\bf Ideal tensors.}  Theorem~C implies the existence of streamlined
versions of the tensors $\{R_n\}_{n \geq 3}$, $\{T_n\}_{n \geq 2}$ and
$\{L_n\}_{n \geq 2}$ for which the quasi-symmetries induced by the
symmetries (s1), (s3), (s4), (t1), (t2), (l1), (l2), (l3), (l4) and
equation~(\ref{eq:3}) given on
pages~\pageref{a}-\pageref{b} are actual symmetries.
So one has tensors $\iR_n$, $n \geq 3$, $\iT_n$, $n
\geq 2$ and $\iL_n$, $n \geq 2$, such that
%\begin{subequations}
\begin{equation}
\label{b1}
\iR_n\left(\Gamma,X_1,\sqdots,X_{n-2},X_{n-1},X_n\right)+
\iR_n\left(\Gamma,X_1,\sqdots,X_{n-1},X_{n-2},X_n\right)
= 0,
\end{equation}
\begin{equation}
\label{b3}
\cyclsum {\sigma}
\iR_n\left(
\Gamma,X_1,\ldots,X_{n-4},X_{\sigma(n-3)},X_{\sigma(n-2)},X_{\sigma(n-1)},X_n
\right)=0,\ n \geq 4,
\end{equation}
where $\textcyclsum$ is the cyclic sum over the indicated
indexes, and
\begin{equation}
\label{b4}
\iR_n\left(
\Gamma,X_{\omega(1)},\ldots,X_{\omega(n-3)},X_{n-2},X_{n-1},X_n
\right)
=\iR_n\left(
\Gamma,X_1,\ldots,X_n
\right),
\end{equation}
for each $n \geq 4$ and a permutation $\omega \in \Sigma_{n-3}$. The
tensors $\iT_n$ satisfy
\begin{equation}
\label{t1}
\iT_n\left(\Gamma,X_1,\sqdots,X_{n-2},X_{n-1},X_n\right)+
\iT_n\left(\Gamma,X_1,\sqdots,X_{n-2},X_{n},X_{n-1}\right)
= 0,
\end{equation}
and, for $n\geq 3$, also
\begin{equation}
\label{t2}
\iT_n\left(
\Gamma,X_{\omega(1)},\ldots,X_{\omega(n-2)},X_{n-1},X_n
\right)
=\iT_n\left(
\Gamma,X_1,\ldots,X_n
\right),
\end{equation}
for each permutation $\omega \in \Sigma_{n-2}$.
Moreover,
\begin{eqnarray}
\label{aa}
\cyclsum {\sigma}
\lefteqn{
\iR_n\left(
\Gamma,X_1,\ldots,X_{n-3},X_{\sigma(n-2)},X_{\sigma(n-1)},X_{\sigma(n)}
\right)=\hskip 2em}
\\ \nonumber 
&&\hskip 2em= - \ \cyclsum {\sigma}
\iT_n\left(
\Gamma,X_1,\ldots,X_{n-3},X_{\sigma(n-2)},X_{\sigma(n-1)},X_{\sigma(n)}
\right),
\end{eqnarray}
%\end{subequations}
with the sums running over cyclic permutations $\sigma$ of $\{n-2,n-1,n\}$.
%\begin{subequations}

The tensor $\iL_2$ is antisymmetric. The tensors $\iL_n$ satisfy, for
$n \geq 3$,
\begin{equation}
\label{l1}
\sum_{\omega}
\iL_n\left(\Gamma,X_1,\sqdots,X_{n-3},X_{\omega({n-2})},
X_{\omega({n-1})},X_{\omega(n)}\right)
= 0,
\end{equation}
where $\omega$ runs over all permutations of $\{n-2,n-1,n\}$. For $n
\geq 4$ they also satisfy
\begin{equation}
\label{l2}
\sum_\omega (-1)^{\sgn(\omega)} \cdot \iL_n(\Gamma,
\Rada X1{n-4},X_{\omega(n-3)},X_{\omega(n-2)},X_{\omega(n-1)},X_n) = 0,
\end{equation}
where $\omega$ runs over all permutations of $\{n-3,n-2,n-1\}$, 
\begin{equation}
\label{l3}
\iL_n\left(
\Gamma,X_{\omega(1)},\ldots,X_{\omega(n-3)},X_{n-2},X_{n-1},X_n
\right)
=\iL_n\left(
\Gamma,X_1,\ldots,X_n
\right),
\end{equation}
for each permutation $\omega \in \Sigma_{n-3}$, and
\begin{equation}
\label{l4}
\sum_{\tau,\lambda} (-1)^{\sgn(\tau) + \sgn(\lambda)}\cdot 
\iL_n(\Gamma,\Rada X1{n-4},
X_{\tau(n-3)},X_{\tau(n-2)},X_{\lambda(n-1)},X_{\lambda(n)}) = 0, 
\end{equation}
with the sum over all permutations $\tau$ (resp.~$\lambda$)
of $\{n-3,n-2\}$ (resp.~of $\{n-1,n\}$).
%\end{subequations}

In Examples~\ref{Ex2}--\ref{Ex4} below we explicitly calculate the
ideal tensors $\iR_n$, $\iT_n$ and $\iL_n$ for $n \leq 4$. Our
calculation is facilitated by the following lemma whose
straightforward though technically involved proof we omit.

\begin{lemma}
\label{pisu_v_Ratajich}
Let $n \geq 3$ and $\Chi$, $V$ be vector spaces over a field of
characteristic zero. Denote by $\FF_L$ the space of all linear maps $L :
\Chi^{\otimes n} \to V$ with symmetry~(\ref{l1}) and, if $n \geq 4$,
also~(\ref{l2})--(\ref{l4}). Denote further 
by $\FF_{(R,T)}$ the space of all pairs
$(R,T)$ of linear maps $R,T : \Chi^{\otimes n} \to V$
satisfying~(\ref{b1}), (\ref{t1})--(\ref{aa}) and, if $n \geq 4$,
also~(\ref{b3}) and~(\ref{b4}).
Define finally the map $\Phi = (\Phi_R,\Phi_T): \FF_L \to \FF_{(R,T)}$ by 
\[
\begin{aligned}
\Phi_R(\Rada X1n) &:= \frac16\big[
L(\Rada X1{n-3},X_{n-1},X_{n-2},X_n) - L(\Rada X1n)\big], \mbox { and}
\\
\Phi_T(\Rada X1n) &:= \frac16\big[
L(\Rada X1n) - 
L(\Rada X1{n-2},X_n,X_{n-1}) \big],
\end{aligned}
\]
and the map
$\Psi : \FF_{(R,T)} \to \FF_L$ by
\[
\begin{aligned}
\Psi(\Rada X1n):= &-3R(\Rada X1n) - R(\Rada X1{n-3},X_{n-1},X_n,X_{n-2})
\\
&+R(\Rada X1{n-3},X_{n},X_{n-2},X_{n-1}) + 2T(\Rada X1n)
\\
& -2T(\Rada X1{n-3},X_{n-1},X_n,X_{n-2}).
\end{aligned}
\]
Then $\Phi$ and $\Psi$ are well-defined mutual inverses, $\Phi : \FF_L
\cong \FF_{(R,T)}: \Psi$.
\end{lemma}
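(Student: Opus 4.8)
The plan is to exploit that the three maps $\Phi_R$, $\Phi_T$, and $\Psi$ all leave the first $n-3$ arguments untouched and merely recombine the last three. Consequently each is right multiplication by a fixed element of the group algebra $\RRR[\Sig_3]$ of the symmetric group acting on the slots $\{n-2,n-1,n\}$; writing $\tau_{12}$, $\tau_{23}$ for the transpositions of the indicated pairs, $c$ for the $3$-cycle of those slots, and $e$ for the unit, one has $\Phi_R L=\tfrac16 L(\tau_{12}-e)$ and $\Phi_T L=\tfrac16 L(e-\tau_{23})$. Every defining relation of $\FF_L$ and $\FF_{(R,T)}$ can then be read as the annihilation of a map by a distinguished element of that algebra: \eqref{l1} says $L$ is killed by the full symmetrizer $s=\sum_{\omega\in\Sig_3}\omega$; \eqref{b1} and \eqref{t1} say $R$ and $T$ are killed by the antisymmetrizers $e+\tau_{12}$ and $e+\tau_{23}$; and \eqref{aa} says $R+T$ is killed by the cyclic symmetrizer $e+c+c^2$. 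First I would fix this dictionary once and for all.

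With the dictionary in place, the two composition identities $\Psi\circ\Phi=\id$ and $\Phi\circ\Psi=\id$ become finite computations in the six-dimensional algebra $\RRR[\Sig_3]$, needing only the core relations above. Expanding $\Psi(\Phi_R L,\Phi_T L)$ and collecting the six permuted copies of $L$ one finds
\[
6\,\Psi(\Phi_R L,\Phi_T L)(\Rada X1n)
=6\,L(\Rada X1n)-\sum_{\omega\in\Sig_3}L(\Rada X1{n-3},X_{\omega(n-2)},X_{\omega(n-1)},X_{\omega(n)}),
\]
so $\Psi\circ\Phi=\id$ is exactly the symmetrizer relation~\eqref{l1}. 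The reverse composition $\Phi\circ\Psi=\id$ is the dual bookkeeping: substituting $\Psi$ into $\Phi_R$ and $\Phi_T$ and reducing modulo \eqref{b1}, \eqref{t1}, and \eqref{aa} returns $(R,T)$. Since both are equalities between explicit integer combinations of at most six terms, I expect no difficulty here beyond care with the composition order of permutations.

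It remains to check well-definedness, i.e.\ that $\Phi$ and $\Psi$ land in the prescribed spaces. For the relations living on the last three slots this is again a one-line group-algebra check: $\Phi_R L=\tfrac16 L(\tau_{12}-e)$ is manifestly antisymmetric in $X_{n-2},X_{n-1}$, giving \eqref{b1}, a short manipulation of $s$ yields \eqref{aa}, and dually $\Psi(R,T)\cdot s=0$ follows from \eqref{b1}, \eqref{t1}, \eqref{aa}, giving \eqref{l1}. For $n\geq 4$ the remaining relations \eqref{l2}, \eqref{l3}, \eqref{l4}, \eqref{b3}, \eqref{b4}, and the now non-vacuous \eqref{t2} involve the slot $n-3$ (and in \eqref{l4} the pair $\{n-3,n-2\}$). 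Here the key structural remark is that $\Phi$ and $\Psi$ fix slot $n-3$ and commute with every permutation of the first $n-3$ slots, so the symmetry \eqref{l3} transports verbatim into \eqref{b4} and back, while the antisymmetrization relations have to be carried across the boundary between the fixed block and the recombined last three slots.

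The main obstacle, and the source of the phrase ``technically involved'', is precisely this last transport for $n\geq 4$: deriving \eqref{t2} and \eqref{b3} from the $L$-side relations. For instance, the symmetry of $\Phi_T L$ in the pair $(X_{n-3},X_{n-2})$ demanded by \eqref{t2} does not follow from \eqref{l1} or \eqref{l3} alone; writing $\sigma$ for the transposition of slots $n-3,n-2$ and using that $\sigma$ commutes with $\tau_{23}$, one has $\Phi_T L\cdot(\sigma-e)=\tfrac16\,L\,(\sigma-e)(e-\tau_{23})$, which is $-\tfrac16$ times the double antisymmetrization~\eqref{l4} and hence vanishes. In the same way \eqref{b3} and the higher symmetries of $\Phi_R L$ must be matched against \eqref{l2} and \eqref{l4}, and conversely \eqref{l2}, \eqref{l4} recovered from \eqref{b1}, \eqref{b3} on the $\Psi$-side. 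I would organize this part as a short table pairing each target relation with the source relation that produces it, every entry being a cancellation of disjoint---hence commuting---antisymmetrizers; the bookkeeping is voluminous but entirely mechanical, which is why the authors are content to omit it.
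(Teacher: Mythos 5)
The paper gives you nothing to compare against here: the authors explicitly omit the proof of Lemma~\ref{pisu_v_Ratajich}, describing it only as ``straightforward though technically involved.'' Your proposal is a correct proof, and almost certainly of exactly the kind the authors had in mind. I verified the central computations in your framing: writing the maps as right multiplication by elements of $\RRR[\Sigma_3]$ acting on the last three slots, the expansion of $\Psi(\Phi_R L,\Phi_T L)$ does collect to $L-\tfrac16\sum_{\omega}L\omega$, so $\Psi\circ\Phi=\id$ is precisely the symmetrizer relation~\eqref{l1}; in the other direction, $\Phi_R(\Psi(R,T))=R$ follows from \eqref{b1} and \eqref{t1} alone, and $\Phi_T(\Psi(R,T))=T$ additionally uses \eqref{aa}, confirming $\Phi\circ\Psi=\id$. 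Your treatment of the ``boundary-crossing'' symmetries is also right where it matters most: the $(n-3,n-2)$-symmetry of $\Phi_T L$ demanded by \eqref{t2} is exactly $-\tfrac16$ times the double antisymmetrization \eqref{l4}, and the cyclic identity \eqref{b3} for $\Phi_R L$ works out to $-\tfrac16$ times \eqref{l2}, so both vanish as you claim.

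Two of your attributions are imprecise, though harmlessly so. First, \eqref{aa} for the pair $(\Phi_R L,\Phi_T L)$ needs no relation on $L$ at all: it holds identically, because $(\tau_{12}-\tau_{23})(e+c+c^2)=0$ in $\RRR[\Sigma_3]$ (the odd permutations form a single coset of the cyclic subgroup), so no ``manipulation of $s$'' is required. Second, on the $\Psi$ side the sourcing is not just \eqref{b1} and \eqref{b3}: checking the details, \eqref{l1} for $\Psi(R,T)$ follows from \eqref{b1} alone (the sum over all six permutations of $\Psi$ reduces to $-3$ times the full symmetrization of $R$), \eqref{l2} uses \eqref{b1}, \eqref{b3} and \eqref{t2}, while \eqref{l4} uses \eqref{b1}, \eqref{t1}, \eqref{t2} and \eqref{aa} — and, perhaps surprisingly, not \eqref{b3}. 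All of these are the disjoint-transposition cancellations you describe, so your table-building strategy goes through; only the labels in the table's ``source'' column need correcting.
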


The maps $\Phi$ and $\Psi$ of Lemma~\ref{pisu_v_Ratajich} produce from
ideal tensors $\iR_n, \iT_n$ the ideal tensor $\iL_n$ and vice
versa. Since the ideal tensors $\iR_n, \iT_n$ can be constructed as 
modification of the 
covariant derivatives of the classical curvature and torsion tensors,
we start in examples below with them and obtain $\iL_n$ as $\Psi(\iR_n,\iL_n)$.

\begin{example}
\label{Ex2}
If $n=2$, the tensor $\iT_2=T_2 = T$ satisfies the antisymmetry~(\ref{t1}),
so $\iL_2 = \iT_2 =  T$. There is, of course, no $\iR_2$.
\end{example}

To make formulas shorter, in the following two
examples we drop the implicit $\Gamma$ from the notation.

\begin{example}
\label{Ex3}
If $n=3$, then the tensor $R_3 = R$ satisfies~(\ref{b1}), the tensor 
$T_3 = \nabla T$
satisfies~(\ref{t1}) and, trivially, also~(\ref{t2}), but the couple
$(R_3,T_3) = (R,\nabla T)$ {\em does not\/} satisfy~(\ref{aa}).
If one takes, instead of $T_3 = \nabla T$, a streamlined version 
\[
\iT_3(X,Y,Z) := (\nabla_XT)(Y,Z) - T(X,T(Y,Z)), 
\]
then $\iT_3$ satisfies~(\ref{t1}),~(\ref{t2}), and the couple
$(\iR_3 = R_3,\iT_3)$ satisfies~(\ref{aa}) which is in this case 
precisely the first Bianchi 
identity \eqref{Eq: 1st B.i.}
for the curvature of a connection with nontrivial torsion.

It follows from Lemma~\ref{pisu_v_Ratajich} that 
the tensor $\iL_3$ defined by
\begin{equation}
\label{Eq: L3}
\begin{aligned}
\iL_3(X,Y,Z) 
 := & 
- 3\, \iR_3(X,Y,Z) - \iR_3(Y,Z,X) 
+ \iR_3(Z,X,Y)
\\
&+  2\, \iT_3(X,Y,Z) - 2 \, \iT(Y,Z,X)
\end{aligned}
\end{equation}
satisfies~(\ref{l1}), so it is the `ideal' $\iL_3$.
On the other hand, by the same lemma,
given $\iL_3$ satisfying~\eqref{l1}, we have
\begin{equation}\label{Eq: R3}
\iR_3(X,Y,Z) = - \frac 16 \big[\iL_3(X,Y,Z) 
- \iL_3(Y,X,Z)\big] 
\end{equation}
satisfying~\eqref{b1}. Further
\begin{align}\label{Eq: T3}
\iT_3(X,Y,Z) & = \frac 1{6} \big[\iL_3(X,Y,Z) - \iL_3(X,Z,Y)\big] 
\,
\end{align}
satisfies~\eqref{t1} and, trivially,~\eqref{t2}. Moreover, the pair
$(\iR_3,\iT_3)$ satisfies~\eqref{aa}.  If we put $\iR_3$ and $\iT_3$
calculated from \eqref{Eq: R3} and \eqref{Eq: T3} into \eqref{Eq: L3},
we recover $\iL_3$. Likewise, if we substitute $\iL_3$ calculated
from~\eqref{Eq: L3} into \eqref{Eq: R3} and \eqref{Eq: T3}, we 
get $\iR_3$ and $\iT_3$, because the transformations~(\ref{Eq: L3})
and~(\ref{Eq: R3})--(\ref{Eq: T3}) are, by
Lemma~\ref{pisu_v_Ratajich}, mutually inverse. 
\end{example}

\begin{example}
\label{Ex4}
If $n=4$, the tensor $R_4 = \nabla R$ satisfies ~(\ref{b1}) and,
trivially, also~(\ref{b4}) but {\it does not} satisfy ~(\ref{b3})
because of the non vanishing right hand side of the 2nd Bianchi
identity~\eqref{Eq: 2nd B.i.}.  We found the following explicit
formula for a streamlined couple $(\iR_4,\iT_4)$ in which $\iR_4$ is
given by
\begin{align}
\nonumber 
\iR_4&(X_1,\dots,X_4)
=
(\nabla_{X_1} R)(X_2,X_3)(X_4)
\\
& + \nonumber 
\frac12\,\big[
R(T(X_1,X_2),X_3)(X_4)
+ R(X_2,T(X_1,X_3))(X_4)\big]
\\
& - \nonumber
 \frac12\,\big[
T(R(X_2,X_3)(X_1),X_4) 
+ T((\nabla_{X_1} T)(X_2,X_3),X_4)
 +
T(T(T(X_2,X_3),X_1),X_4)
\big]
\\
& + \nonumber
 \frac14\,\big[
- 2\, (\nabla_{X_1} T)(T(X_2,X_3),X_4) 
- (\nabla_{X_2} T)(T(X_1,X_3),X_4)
+
(\nabla_{X_3} T)(T(X_1,X_2),X_4)  
\big]
\\
& + \nonumber
 \frac18\,\big[
T(T(X_3,X_4),T(X_1,X_2))
- T(T(X_2,X_4),T(X_1,X_3))
+2\, T(T(X_2,X_3),T(X_1,X_4))
\big].
\end{align}
It satisfies identities \eqref{b1},~\eqref{b3} and, trivially,
also~(\ref{b4}).
For $\iT_4$ we found
\begin{align*}
\iT_4   (X_1,X_{2},X_{3},X_{4})
&=
\frac 12\big[  
(\nabla_{X_1} \nabla_{X_{2}} T)(X_{3},X_{4}) 
+(\nabla_{X_{2}} \nabla_{X_1} T)(X_{3},X_{4})
\big]
\\
&
-\frac14 \big[
 R(X_1,X_{3})(T(X_{4},X_{2}))
+ R(X_2,X_{3})(T(X_{4},X_{1}))
\\
&\hskip 2cm
- R(X_1,X_{4})(T(X_{3},X_{2}))
- R(X_2,X_{4})(T(X_{3},X_{1}))
\big]
\\
&
+\frac34 \big[
 (\nabla_{X_1} T)(T(X_2,X_3),X_{4})
+ (\nabla_{X_2} T)(T(X_1,X_3),X_{4})
\\
&\hskip 2cm
- (\nabla_{X_1} T)(T(X_2,X_4),X_{3})
- (\nabla_{X_2} T)(T(X_1,X_4),X_{3})
\big]
\\
&
+\frac12 \big[
 T((\nabla_{X_1} T)(X_2,X_3),X_{4})
+ T((\nabla_{X_2} T)(X_1,X_3),X_{4})
\\
&\hskip 2cm -
T((\nabla_{X_1} T)(X_2,X_4),X_{3})
- T((\nabla_{X_2} T)(X_1,X_4),X_{3})
\big].
\end{align*}
It is easy to see that $\iT_4$ satisfies identities
\eqref{t1} and \eqref{t2}, and the pair $(\iR_4\,,\,\iT_4)$
satisfies~\eqref{aa}. 
By Lemma~\ref{pisu_v_Ratajich}, we may put
\begin{align*}
\iL_4& \left(  X_{{1}},X_{{2}},X_{{3}},X_{4}\right)
 =
- 3\,\iR_4( X_{{1}},X_{{2}}, X_{{3}},X_{4})
- \, \iR_4(X_{{1}},X_{{3}}, X_{{4}},X_{2})
\\
& \nonumber
\iT_4(X_{{1}},X_{{4}}, X_{{2}},X_{3})
+ 2 \, \iT_4(X_{{1}},X_{{2}}, X_{{3}},X_{4})
- 2 \, \iT_4(X_{{1}},X_{{3}}, X_{{4}},X_{2})\,.
\end{align*}

On the other hand, given an `ideal' $\iL_4$
satisfying~\eqref{l1}--\eqref{l4}, the equations
\begin{align*}
\iR_4(X_{{1}},X_{{2}},X_{{3}},X_{4})
& := 
\frac16\big[\iL_4(X_{{1}},X_{{3}},X_{{2}},X_{4}) 
-  \iL_4(X_{{1}},X_{{2}},X_{{3}},X_{4})\big]\ \mbox { and }
\\
\iT_4(X_{{1}},X_{{2}},X_{{3}},X_{4})
& := 
\frac16\big[\iL_4(X_{{1}},X_{{2}},X_{{3}},X_{4}) 
-  \iL_4(X_{{1}},X_{{2}},X_{{4}},X_{3})\big]
\end{align*}
determine `ideal' $\iR_4$ and $\iL_4$. 
\end{example}

We saw above that calculating the ideal tensors $\iR_n,\iT_n$ and
$\iL_n$ is difficult already for $n=4$. To find explicit
formulas for arbitrary $n\geq 3$ is, as in the torsion-free
case~\cite{janyska-markl}, a~challenging task.

Let $\Kr$ be the collection of the kernels~(\ref{kernel}) and
$\Gr[\Krtor](d)$ the space spanned by graphs with $d$ black
vertices~(\ref{black}), one vertex \anchor\ and a finite number of
vertices decorated by elements of~$\Kr$, see
pages~\pageref{proofs}--\pageref{k} of Section~\ref{proofs} for a precise
definition. 
The size of the space of natural operators 
$\Contor \times T^{\otimes d} \to T$ is described in:

\begin{theoremD}
On manifolds of dimension $\geq 2d-1$, the vector space $\Nat(\Contor
\times T^{\otimes d},T)$ is isomorphic to the graph space $\Gr[\Krtor](d)$.
\end{theoremD}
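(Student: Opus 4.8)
The plan is to build an explicit linear map from the graph space to the operator space, prove it is onto using the generation results already established, and then devote the main effort to injectivity, where the hypothesis $\dim(M) \geq 2d-1$ enters decisively. First I would define the realization map $\Phi : \Gr[\Krtor](d) \to \Nat(\Contor \times T^{\otimes d},T)$. A generator of $\Gr[\Krtor](d)$ is a graph with $d$ black vertices, one anchor vertex, and finitely many vertices decorated by elements of the kernels $\Kr(n)$; I read such a graph as a complete tensor contraction by interpreting each black vertex as one of the vector fields $X_1,\dots,X_d$, each decorated vertex as the corresponding iterated covariant derivative of the curvature and the torsion — whose tensorial symmetries are exactly those encoded in $\Kr(n) \subset \tE^0(n)$ — each edge as the contraction of a lower with an upper index, and the anchor as the single free output index. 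The first thing to check is that $\Phi$ is well defined, that is, that the relabelling relations and the $\Sigma_n$-symmetries carried by the decorations are matched precisely by the symmetries of the tensor expression $\Phi$ produces; this is the reason the decorations were taken in $\Kr(n)$ rather than in all of $\tE^0(n)$.

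Surjectivity is then a formal consequence of the results already in hand. By Theorem~A every $\gO \in \Nat(\Contor \times T^{\otimes d},T)$ is an iteration, in the sense of~\ref{iter}, of the generators $\{D^i_n\}_{(n,i) \in S}$ and $\{V_n\}_{n \geq 1}$, and each generator is the image under $\Phi$ of an elementary decorated graph. Under $\Phi$ the four elementary operations of~\ref{iter} — permuting inputs, taking linear combinations, the partial composition $\circ_i$, and the trace-product — correspond respectively to relabelling a graph, forming formal sums, grafting one graph onto a black vertex of another, and closing an internal edge and juxtaposing graphs. Hence $\Phi$ is onto, and Theorem~B, which in the range $\dim(M) \geq 2d-1$ bounds the $V_n$'s that can occur, keeps the graphs produced this way of controlled size.

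The heart of the proof is injectivity. Passing to the fibre over a point, naturality turns $\gO$ into a $GL(n)$-equivariant polynomial map on the relevant jet space, and $\Phi$ becomes the classical assignment of complete contractions to index diagrams. By the first fundamental theorem of invariant theory for $GL(n)$ these contractions span all such equivariant maps, re-proving surjectivity; by the second fundamental theorem, the only relations among them are generated by over-antisymmetrization, i.e.\ by antisymmetrizing more than $\dim(M)$ of the indices. The role of the bound $\dim(M) \geq 2d-1$ is to guarantee that no graph appearing in $\Gr[\Krtor](d)$ ever forces an antisymmetrization over more than $2d-1$ indices — the index sets that can be antisymmetrized are controlled by the $d$ vector-field slots together with the output — so that in this range no dimensional relation is active and distinct graph classes map to linearly independent operators.

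I expect the decisive step, and the one genuinely different from the torsion-free computation of~\cite{janyska-markl}, to be precisely this bound on the number of simultaneously antisymmetrized indices, together with the verification that every relation holding in the stable range is already imposed by the $\Kr(n)$-decorations. Establishing it requires working directly with the torsion graph complex — where, unlike the symmetric case, the last two connection indices are not symmetrized and the generators split into the curvature and torsion families — exactly as the authors announce when they restrict the reproduced proofs to those dealing with the corresponding graph complex.
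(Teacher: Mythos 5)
Your proposal takes a genuinely different route from the paper, but it has a fatal gap at its very first step: the realization map $\Phi$ is not well-defined. A vertex of a graph in $\Gr[\Kr](d)$ is decorated by an element of the kernel $\Kr(n)$, and these decorations satisfy \emph{exact} linear relations; your $\Phi$ substitutes for such a vertex the tensors $\nabla^{n-3}R$ and $\nabla^{n-2}T$, whose symmetries agree with those of $\Kr(n)$ only in the leading term, i.e.\ modulo lower-order corrections. This is precisely the distinction between quasi-symmetries and symmetries (Definition~\ref{debra}), and it is the entire point of Theorem~C. Concretely, the leading terms $r_n$, $t_n$ satisfy the exact relation~(\ref{eq:3}), so a vertex decorated by that cyclic sum represents the \emph{zero} element of $\Gr[\Kr](d)$; but its image under your $\Phi$ would be the cyclic sum of $R_n+T_n$, which by the Bianchi identities with torsion (\ref{Eq: 1st B.i.})--(\ref{Eq: 2nd B.i.}) is a \emph{non-zero} lower-order operator --- see Example~\ref{Ex3}, where the paper notes that the couple $(R,\nabla T)$ violates~(\ref{aa}). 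So your claim that the covariant derivatives of curvature and torsion have ``tensorial symmetries exactly those encoded in $\Kr(n)$'' is false, and $\Phi$ sends zero to nonzero. Repairing this requires either the ideal tensors of Theorem~C --- whose proof itself rests on the bicomplex machinery --- or working modulo the filtration by the number of $\nabla$-vertices, which is what the paper actually does.

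There are two further structural problems. First, circularity: your surjectivity step invokes Theorems~A and~B, but in this paper those theorems are \emph{consequences} of the isomorphism you are trying to prove --- their proofs use Lemma~\ref{l22}, Proposition~\ref{propo} and Theorem~\ref{Th2} --- so they are not ``already in hand'' before Theorem~D. Second, your injectivity argument is in effect an attempt to re-prove Theorem~\ref{Th2} of~\cite{markl:na}, which the paper simply cites; your appeal to the second fundamental theorem never explains why graphs in $\Gr[\Kr](d)$ cannot force antisymmetrizations over more than $\dim(M)$ indices (the indices available for antisymmetrization come from all contracted jet indices of the connection appearing in the decorated vertices, not just the $d$ vector-field slots and the output), so the role of the bound $2d-1$ is asserted rather than derived. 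The paper's actual route is: Theorem~\ref{Th2} identifies $\Nat(\Contor\times T^{\otimes d},T)$ with $H^0(\Gr^*(d),\delta)$ in the stable range; the grading~(\ref{bigrad}) by the number of $\nabla$-vertices makes $(\Gr^*(d),\delta)$ the total complex of a bicomplex whose horizontal cohomology is concentrated on the diagonal and equals $\Gr[\Kr](d)$ (exactness of the decorated-graph functor plus surjectivity of $\vartheta$, Lemma~\ref{l22}); a standard spectral-sequence argument then finishes. That cohomological formulation is exactly the device that absorbs the lower-order corrections which break your direct map.
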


\begin{example}
As in the torsion-free case, the calculation of the dimension of
$\Gr[\Kr](d)$ is a purely combinatorial problem. For $d=1$ we get
$\dim(\Gr[\Kr](d)) = 1$, with the corresponding natural operator the
identity $X \mapsto X$.

One sees that, on manifolds of dimension $\geq 3$, 
$\dim(\Nat(\Con \times T^{\otimes 2},T)) = 7$. The
corresponding operators are 
\[
\nabla_XY,\ \nabla_YX,\ 
\Tr (\nabla_- Y) \cdot X\ \mbox {and}\  \Tr (\nabla_- X)\cdot Y
\]
as in the torsion-free case (see~\cite[Example~3.18]{janyska-markl}),
plus three operators
\[
T(X,Y),\ \Tr(T(-,Y)) \cdot X\
\mbox{and}\  \Tr(T(-,X)) \cdot Y
\]
involving the torsion.
\end{example}

%%%%%%%%%%%%%%%%%%%%%%%%%%%%%%%%%%%%%%%%%%%%%%%%%%%%%%%%%%%%%%%%%%%%%%%%%
\section{Proofs}
\label{proofs}
\setcounter{equation}{0}
%%%%%%%%%%%%%%%%%%%%%%%%%%%%%%%%%%%%%%%%%%%%%%%%%%%%%%%%%%%%%%%%%%%%%%%%%

As everywhere in this paper, we use the notation parallel to that
of~\cite{janyska-markl}, but the reader shall keep in mind that we
dropped the torsion-free assumption.  As expected, the proofs will be
based on a suitable graph complex describing operators of a given type
which was, in fact, already been described in~Section~4
of~\cite{janyska-markl}, see~4.7 of that section in particular. We
only briefly recall its definition, leaving the details and
motivations to~\cite{janyska-markl} and~\cite{markl:na}.

We consider the graded graph complex
$\Gr^*(d)$ whose
degree $m$ part $\Gr^m(d)$ is spanned by oriented graphs with precisely $d$
`black' vertices
\begin{equation}
\label{black}
\raisebox{-3em}{\rule{0pt}{0pt}}
b_u:= \hskip -6em 
\unitlength 4mm
\linethickness{0.4pt}
\begin{picture}(20,4)(10.5,18.8)
\put(20,20){\vector(0,1){2}}
\put(18,18){\vector(1,1){1.9}}
\put(22,18){\vector(-1,1){1.9}}
\put(19,18){\vector(1,2){.935}}
\put(20,20){\makebox(0,0)[cc]{\Large$\bullet$}}
\put(25,20){\makebox(0,0)[cc]{,\ $u \geq 0$,}}
\put(20,18){\makebox(0,0)[cc]{\scriptsize (\hskip 18mm)}}
\put(20.5,18){\makebox(0,0)[cc]{$\ldots$}}
\put(20,17){\makebox(0,0)[cc]{%
   $\underbrace{\rule{16mm}{0mm}}_{\mbox{\scriptsize $u$ inputs}}$}}
\end{picture}
\end{equation}
labelled $1,\ldots,d$,  some number of `$\nabla$-vertices' 
\begin{equation}
\label{nabla}
\unitlength 5mm
\begin{picture}(.7,2)(-.5,-.5)
\put(-.5,0){\makebox(0,0)[cc]{\Large$\nabla$}}
\put(2.5,0){\makebox(0,0)[cc]{, $u \geq 0$.}}
\put(-.5,.4){\vector(0,1){1}}
\put(.1,-1.25){\vector(-1,2){.5}}
\put(.59,-1.22){\vector(-1,1){.97}}
\unitlength 3mm
%dolni leve vektory
\put(-21,-20){
\put(19.4,18){\vector(1,4){0.475}}
\put(18.9,18){\vector(1,2){.95}}
\put(16,18){\vector(2,1){3.8}}
\put(17.75,18){\makebox(0,0)[cc]{$\ldots$}}
\put(17.7,16.6){\makebox(0,0)[cc]{%
   $\underbrace{\rule{10mm}{0mm}}_{\mbox{\scriptsize $u$ inputs}}$}}
\put(17.7,17.8){\makebox(0,0)[cc]{\scriptsize $( \hskip 12mm )$}}
%\put(21.5,17.8){\makebox(0,0)[cc]{\scriptsize $( \hskip 4mm )$}}
}
\end{picture}
\raisebox{-2.5em}{\rule{0pt}{0pt}}
\end{equation}
precisely $m$ `white' vertices
\begin{equation}
\label{white}
\raisebox{-3em}{\rule{0pt}{0pt}}
\unitlength 4mm
\linethickness{0.4pt}
\begin{picture}(20,4)(10.5,18.8)
\put(20,20.3){\vector(0,1){1.7}}
\put(18,18){\vector(1,1){1.8}}
\put(22,18){\vector(-1,1){1.8}}
\put(19,18){\vector(1,2){.91}}
\put(20,20){\makebox(0,0)[cc]{\Large$\circ$}}
\put(25,20){\makebox(0,0)[cc]{,\ $u \geq 2$,}}
\put(20,18){\makebox(0,0)[cc]{\scriptsize (\hskip 18mm)}}
\put(20.5,18){\makebox(0,0)[cc]{$\ldots$}}
\put(20,17){\makebox(0,0)[cc]{%
   $\underbrace{\rule{16mm}{0mm}}_{\mbox{\scriptsize $u$ inputs}}$}}
\end{picture}
\end{equation}
and one vertex \anchor\ (the anchor). We will usually omit the
parentheses $( \hskip 1em )$ indicating that the inputs they encompass
are fully symmetric.  In contrast to the torsion-free case, the
$\nabla$-vertex~(\ref{nabla}) is \underline{not} symmetric in the
rightmost two inputs. The interpretation of the vertices is explained
in~\cite[Section~4]{janyska-markl}.  The differential is given by the
replacement rules that are `informally' the same as these
in~\cite[Section~4]{janyska-markl} (but formally not, since the
symmetries of the $\nabla$-vertex are different), i.e.
\[
\raisebox{-4.2em}{\rule{0pt}{0pt}}
\hskip -2cm
\unitlength 3mm
\linethickness{0.4pt}
\begin{picture}(20,4)(10.5,18.8)
\put(16.8,19.5){\makebox(0,0){$\delta\left(\rule{0pt}{20pt}\right.$}}
\put(22.8,19.5){\makebox(0,0){$\left.\rule{0pt}{20pt}\right)$}}
\put(20,20.4){\vector(0,1){1.7}}
\put(18,18){\vector(1,1){1.8}}
\put(22,18){\vector(-1,1){1.8}}
\put(19,18){\vector(1,2){.91}}
\put(20,20){\makebox(0,0)[cc]{\Large$\circ$}}
\put(20.5,18){\makebox(0,0)[cc]{$\ldots$}}
\put(20,16.7){\makebox(0,0)[cc]{%
   $\underbrace{\rule{12mm}{0mm}}_{\mbox{\scriptsize $k$ inputs}}$}}
\end{picture}
\hskip -2.1cm :=  \sum_{s + u = k}
\hskip -1.2cm
\unitlength3mm
\linethickness{0.4pt}
\begin{picture}(20,4)(10.5,18.8)
\put(20,20.4){\vector(0,1){1.7}}
\put(18.1,18.1){\vector(1,1){1.7}}
\put(22,18){\vector(-1,1){1.8}}
\put(19,18){\vector(1,2){.91}}
\put(20,20){\makebox(0,0)[cc]{\Large$\circ$}}
\put(20.5,18){\makebox(0,0)[cc]{$\ldots$}}
\put(20.5,16.9){\makebox(0,0)[cc]{%
   $\underbrace{\rule{9mm}{0mm}}_{\mbox{\scriptsize $s$}}$}}
\end{picture}
\put(-22.5,-2.2){
\begin{picture}(20,4)(10.5,18.8)
\put(18,18){\vector(1,1){1.8}}
\put(22,18){\vector(-1,1){1.8}}
\put(19,18){\vector(1,2){.91}}
\put(20,20){\makebox(0,0)[cc]{\Large$\circ$}}
\put(20.5,18){\makebox(0,0)[cc]{$\ldots$}}
\put(20,17){\makebox(0,0)[cc]{%
   $\underbrace{\rule{12mm}{0mm}}_{\mbox{\scriptsize $u$}}$}}
\put(21.9,19){\makebox(0,0)[cc]{$\left(\rule{0pt}{20pt} \hskip 2.2cm
    \right)_{\rm ush}$}}
\end{picture}
}\hskip -4.5em  ,\ k \geq 2,
\]
for white vertices,
\[
\raisebox{-3.2em}{\rule{0pt}{0pt}}
\unitlength 3mm
\hskip -1cm
\linethickness{0.4pt}
\delta\left(
\begin{picture}(5,3)(17.5,18.8)
\put(20,20.3){\vector(0,1){1.7}}
\put(18,18){\vector(1,1){1.8}}
\put(22,18){\vector(-1,1){1.8}}
\put(19,18){\vector(1,2){.91}}
\put(20,20){\makebox(0,0)[cc]{\Large$\bullet$}}
%\put(19.5,20.5){\makebox(0,0)[rb]{\scriptsize $X$}}
\put(20.5,18){\makebox(0,0)[cc]{$\ldots$}}
\put(20,16.7){\makebox(0,0)[cc]{%
   $\underbrace{\rule{12mm}{0mm}}_{\mbox{\scriptsize $k$ inputs}}$}}
\end{picture}
\right)
:= \sum_{s+u=k}
\hskip .7cm
\unitlength 3mm
\linethickness{0.4pt}
\begin{picture}(15,3)(15.5,18.8)
\put(20,20.3){\vector(0,1){1.7}}
\put(18,18){\vector(1,1){1.8}}
\put(22,18){\vector(-1,1){1.8}}
\put(19.1,18){\vector(1,2){.91}}
\put(20,20){\makebox(0,0)[cc]{\Large$\circ$}}
\put(20.5,18){\makebox(0,0)[cc]{$\ldots$}}
\put(20.5,16.8){\makebox(0,0)[cc]{%
   $\underbrace{\rule{9mm}{0mm}}_{\mbox{\scriptsize $s$}}$}}
\end{picture}
\put(-22.5,-2.2){
\begin{picture}(10,4)(10.5,18.8)
\put(18,18){\vector(1,1){1.8}}
\put(22,18){\vector(-1,1){1.8}}
\put(19.1,18){\vector(1,2){.91}}
\put(20,20){\makebox(0,0)[cc]{\Large$\bullet$}}
%\put(19.5,20.5){\makebox(0,0)[rb]{\scriptsize $X$}}
\put(20.5,18){\makebox(0,0)[cc]{$\ldots$}}
\put(20,16.8){\makebox(0,0)[cc]{%
   $\underbrace{\rule{11mm}{0mm}}_{\mbox{\scriptsize $u$}}$}}
\put(21.7,19){\makebox(0,0)[cc]{$\left(\rule{0pt}{17pt} \hskip 2.1cm
\right)_{\rm ush}$}}
\end{picture}
}
\hskip -2cm
-
\hskip -1cm
\unitlength 3mm
\linethickness{0.4pt}
\begin{picture}(20,3)(10.5,18.8)
\put(20,20.3){\vector(0,1){1.7}}
\put(18.1,18.1){\vector(1,1){1.7}}
\put(22,18){\vector(-1,1){1.8}}
\put(19.1,18){\vector(1,2){.91}}
\put(20,20){\makebox(0,0)[cc]{\Large$\bullet$}}
%\put(19.5,20.5){\makebox(0,0)[rb]{\scriptsize $X$}}
\put(20.5,18){\makebox(0,0)[cc]{$\ldots$}}
\put(20.5,16.8){\makebox(0,0)[cc]{%
   $\underbrace{\rule{9mm}{0mm}}_{\mbox{\scriptsize $s$}}$}}
\end{picture}
\put(-22.5,-2.2){
\begin{picture}(20,3)(10.5,18.8)
%\put(20,20.3){\vector(0,1){1.7}}
\put(18,18){\vector(1,1){1.8}}
\put(22,18){\vector(-1,1){1.8}}
\put(19.1,18){\vector(1,2){.91}}
\put(20,20){\makebox(0,0)[cc]{\Large$\circ$}}
\put(20.5,18){\makebox(0,0)[cc]{$\ldots$}}
\put(20,16.7){\makebox(0,0)[cc]{%
   $\underbrace{\rule{12mm}{0mm}}_{\mbox{\scriptsize $u$}}$}}
\put(21.7,19){\makebox(0,0)[cc]{$\left(\rule{0pt}{20pt} \hskip 2.1cm 
\right)_{\rm ush}$}}
\end{picture}
} \hskip -4.5em  ,\ k \geq 0,
\]
for black vertices and $\delta(\anchor) = 0$ for the anchor.  The
braces $(\hskip 1em )_{\rm ush}$ in the right hand sides
indicate the summations over all $(u,s-1)$-unshuffles.
The replacement rule for the
$\nabla$-vertices is of the form
\begin{equation}
\label{e11}
\hskip 2cm
\unitlength .5cm
\begin{picture}(.7,2)(-.5,-.5)
\put(-3.9,0){\makebox(0,0){$\delta\left(\rule{0pt}{20pt}\right.$}}
\put(1.2,0){\makebox(0,0){$\left.\rule{0pt}{20pt}\right)$}}
\put(-.5,0){\makebox(0,0)[cc]{\Large$\nabla$}}
\put(-.5,.4){\vector(0,1){1}}
\put(.1,-1.25){\vector(-1,2){.5}}
\put(.67,-1.1){\vector(-1,1){1}}
\unitlength 3mm
%dolni leve vektory
\put(-21,-20){
\put(19.4,18){\vector(1,4){0.475}}
\put(18.9,18){\vector(1,2){.95}}
\put(16,18){\vector(2,1){3.8}}
\put(17.75,18){\makebox(0,0)[cc]{$\ldots$}}
\put(17.7,16.6){\makebox(0,0)[cc]{%
   $\underbrace{\rule{10mm}{0mm}}_{\mbox{\scriptsize $k$ inputs}}$}}
}
\end{picture}
\hskip 1cm := \hskip .5cm
G_k \hskip .2cm - \hskip -.2cm
\unitlength 3mm
\begin{picture}(20,4)(15.5,18.8)
\put(20,20.3){\vector(0,1){1.7}}
\put(18,18){\vector(1,1){1.8}}
\put(22,18){\vector(-1,1){1.8}}
\put(19,18){\vector(1,2){.88}}
\put(20,20){\makebox(0,0)[cc]{\Large$\circ$}}
\put(20.5,18){\makebox(0,0)[cc]{$\ldots$}}
\put(20,16.6){\makebox(0,0)[cc]{%
   $\underbrace{\rule{12 mm}{0mm}}_{\mbox{\scriptsize $k+2$}}$}}
\end{picture}
\raisebox{-2.5em}{\rule{0pt}{0pt}}
\end{equation}
where $G_k$ is a linear combination of $2$-vertex trees with one
$\nabla$-vertex~(\ref{nabla}) with $u < k$, and one white
vertex~(\ref{white}) with $u < k+2$. The concrete form of $G_k$ is not
relevant for our paper, the interested reader may find some examples
in~\cite[Section~4]{janyska-markl}.  
The central statement is

\begin{theorem}[\cite{markl:na}]
\label{Th2}
Each element in $H^0(\Gr^*(d),\delta) = \Ker\left(\delta : \Gr^0(d)\to
\Gr^1(d)\right)$ represents a~natural operator $\Contor \times
T^{\otimes d} \to T$. On manifolds of dimension $\geq 2d-1$ this
correspondence is an isomorphism $H^0(\Gr^*(d),\delta) \cong
\Nat(\Contor \times T^{\otimes d},T)$.
\end{theorem}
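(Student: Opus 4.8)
The plan is to realize this as the special case of the general graph-complex machinery of~\cite{markl:na}, indicating the few points where dropping the torsion-free assumption requires care. The argument rests on the \emph{evaluation map} sending a graph $G \in \Gr^0(d)$ to a local coordinate formula: each black vertex inserts one of the vector-field variables together with its covariant derivatives, each $\nabla$-vertex contributes a jet of the connection (a partial derivative of a Christoffel symbol), internal edges contract the matching upper and lower indices, and the anchor \anchor\ carries the single free upper index of the output. First I would make this assignment precise and linear.

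The core step is to prove that the formula attached to $G$ is invariant under changes of local coordinates exactly when $\delta G = 0$. Under a coordinate change the jets of the Christoffel symbols acquire an inhomogeneous (non-tensorial) part, and the differential $\delta$ is designed so that its white-vertex output is the graphical record of precisely these inhomogeneous contributions; the replacement rule~\eqref{e11} for the $\nabla$-vertex encodes the transition term. Hence all non-tensorial pieces cancel iff the white-vertex terms of $\delta G$ cancel, i.e.\ iff $G$ is a cocycle. Since $\Gr^0(d)$ is the bottom degree, $H^0(\Gr^*(d),\delta)=\Ker\delta$, and this already yields the first assertion. The only genuine difference from the torsion-free treatment of~\cite{janyska-markl} is that the $\nabla$-vertex is no longer symmetric in its last two inputs; this changes the combinatorics of~\eqref{e11} but not the structure of the invariance argument.

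For the isomorphism in the stable range $\dim(M)\ge 2d-1$ I would argue in both directions. Surjectivity comes from the reduction Theorem~\ref{Th: RT2}: every natural operator is an ordinary, i.e.\ pointwise and $GL$-equivariant, concomitant of the iterated covariant derivatives of the vector fields, of the torsion $T$, and of the curvature $R$; by the first fundamental theorem of invariant theory for $GL(n)$ each such concomitant is a linear combination of index contractions of these tensors leaving one free upper index, and every such contraction is the value of a graph in $\Gr^0(d)$. Injectivity is the crux: a combination of graphs whose evaluation is the zero operator must already vanish modulo the cocycle relations. This is exactly where the dimension bound enters, guaranteeing that the only linear relations among the contractions are the formal ones built into the decorations $\Kr$ and into $\delta$, with no accidental relation forced by antisymmetrizing over more than $\dim(M)$ indices.

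The hard part will be this injectivity statement in the stable range --- the second-fundamental-theorem estimate showing that $2d-1$ suffices to rule out dimension-dependent relations; the remaining steps transfer essentially verbatim from~\cite{markl:na} and~\cite{janyska-markl} once the non-symmetric $\nabla$-vertex is accounted for. For this reason I would in the end present the theorem as a direct application of the general result of~\cite{markl:na} to the functor $\Contor$ of all linear connections, verifying only that its hypotheses survive in the non-symmetric setting.
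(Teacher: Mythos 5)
The paper does not prove this statement at all: it is quoted verbatim from \cite{markl:na} (note the attribution in the theorem header) and serves as the external input on which the proofs of Theorems~A--D rest, so there is no internal proof to compare your sketch against. Your closing decision to present the theorem as a direct application of the general result of \cite{markl:na}, verifying only that the non-symmetric $\nabla$-vertex does not disturb the setup, is exactly the paper's treatment; your preceding sketch (evaluation map, cocycle condition as coordinate invariance, stable-range argument via invariant theory) is a fair outline of how the cited reference proceeds, though your claim that the evaluated formula is invariant \emph{exactly} when $\delta G = 0$ overstates the situation --- only the forward implication holds in all dimensions, and the converse (no accidental, dimension-dependent relations) is precisely what the hypothesis $\dim(M) \geq 2d-1$ buys.
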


\vskip .5em
\noindent 
{\bf Proof of Theorem~D} consists of calculation of the homology
$H^0(\Gr^*(d),\delta)$ which, of course, differs from the torsion-free
case. The first step is to observe that
$(\Gr^*(d),\delta)$ is the total complex of the following bicomplex
(see~\cite[\S XI.6]{maclane:homology} for the terminology).  For $p,q
\in {\mathbb Z}$, let
\begin{equation}
\label{bigrad}
\Gr^{p,q}(d) := \Span\left\{
\mbox {\rm graphs } \Graphname \in  \Gr^{p+q}(d);\
\mbox {\rm the number of } \nabla\mbox{\rm-vertices } = -p
\right\}.
\end{equation}
Define the horizontal differential $\deltah : \Gr^{p,q}(d) \to
\Gr^{p+1,q}(d)$ by
\begin{equation}
\label{e15}
\hskip 2.2cm
\deltah\left(\hskip 1.5cm
\unitlength .5cm
\begin{picture}(.7,2)(-.5,-.5)
\put(-.5,0){\makebox(0,0)[cc]{\Large$\nabla$}}
\put(-.5,.4){\vector(0,1){1}}
\put(.1,-1.25){\vector(-1,2){.5}}
\put(.67,-1.1){\vector(-1,1){1}}
\unitlength 3mm
%dolni leve vektory
\put(-21,-20){
\put(19.4,18){\vector(1,4){0.475}}
\put(18.9,18){\vector(1,2){.95}}
\put(16,18){\vector(2,1){3.8}}
\put(17.75,18){\makebox(0,0)[cc]{$\ldots$}}
\put(17.7,16.6){\makebox(0,0)[cc]{%
   $\underbrace{\rule{10mm}{0mm}}_{\mbox{\scriptsize $k$ inputs}}$}}
}
\end{picture}
\hskip .6cm
\right)
:= - \hskip -.3cm
\unitlength 3mm
\begin{picture}(20,4)(15.5,18.8)
\put(20,20.3){\vector(0,1){1.7}}
\put(18,18){\vector(1,1){1.8}}
\put(22,18){\vector(-1,1){1.8}}
\put(19,18){\vector(1,2){.88}}
\put(20,20){\makebox(0,0)[cc]{\Large$\circ$}}
\put(20.5,18){\makebox(0,0)[cc]{$\ldots$}}
\put(20,16.6){\makebox(0,0)[cc]{%
   $\underbrace{\rule{12 mm}{0mm}}_{\mbox{\scriptsize $k+2$}}$}}
\end{picture}
\raisebox{-1cm}{{\rule{0mm}{0mm}}}
\end{equation}
while $\deltah$ is trivial on remaining vertices. The vertical
differential $\deltav : \Gr^{p,q}(d) \to \Gr^{p,q+1}(d)$ is defined by
requiring that $\deltav : = \delta$ on black vertices~(\ref{black}), white
vertices~(\ref{white}) and the anchor~\anchor, while
\[
\deltav\left(\hskip 1.5cm
\unitlength .5cm
\begin{picture}(.7,2)(-.5,-.5)
\put(-.5,0){\makebox(0,0)[cc]{\Large$\nabla$}}
\put(-.5,.4){\vector(0,1){1}}
\put(.1,-1.25){\vector(-1,2){.5}}
\put(.67,-1.1){\vector(-1,1){1}}
\unitlength 3mm
%dolni leve vektory
\put(-21,-20){
\put(19.4,18){\vector(1,4){0.475}}
\put(18.9,18){\vector(1,2){.95}}
\put(16,18){\vector(2,1){3.8}}
\put(17.75,18){\makebox(0,0)[cc]{$\ldots$}}
\put(17.7,16.6){\makebox(0,0)[cc]{%
   $\underbrace{\rule{10mm}{0mm}}_{\mbox{\scriptsize $k$ inputs}}$}}
}
\end{picture}
\hskip .6cm
\right)
:= G_k, \hskip 3cm
\raisebox{-1.2cm}{{\rule{0mm}{0mm}}}
\]
where $G_k$ is the same as in~(\ref{e11}). We prove

\begin{lemma}
\label{l22}
The bicomplex $\Gr^{*,*}(d) = (\Gr^{*,*}(d),\deltah + \deltav)$ 
has the following properties.

\hglue .5em(i) $\Gr^{*,*}(d)$ is concentrated in the sector \ $0 \leq -p \leq q$,

\hglue .25em(ii) $\Gr^{p,*} = 0$
for $p <\!\!< 0$, and\label{ii}

(iii) the horizontal cohomology of $\Gr^{*,*}(d)$ is concentrated on the
diagonal $p+q=0$, i.e.
\[
H^p(\Gr^{*,q},\deltah) =0 \mbox{ for } p+q \not= 0 \mbox { or, equivalently, } 
H^m(\Gr^*,\deltah) = 0 \mbox { for } m \not= 0.
\]
\end{lemma}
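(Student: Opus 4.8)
The plan is to read parts~(i) and~(ii) off from elementary bookkeeping and to concentrate the real effort on~(iii). First I would record, directly from the bigrading~(\ref{bigrad}) and from the fact that the total degree of a graph equals its number of white vertices, that a graph contributing to $\Gr^{p,q}(d)$ has exactly $N=-p$ vertices of type $\nabla$ and exactly $W=p+q$ white vertices, so that $q=N+W$ is the total number of $\nabla$- and white vertices. Part~(i) is then immediate, since $-p=N$ is nonnegative and is bounded above by $q=N+W$. For part~(ii) I would count edge-ends: every black, white and $\nabla$-vertex emits one outgoing edge while the anchor absorbs one, and matching outgoing against ingoing edge-ends gives
\[
d + W + N = 1 + \sum_{\text{black}} u + \sum_{\text{white}} u + \sum_{\nabla}(u+2),
\]
where $u$ is the number of inputs of the indicated vertex and a $\nabla$-vertex with $u$ symmetric inputs has $u+2$ inputs in total. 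As every white vertex has $u\geq 2$ and the other summands are nonnegative, this forces $N\leq d-1$, whence $\Gr^{p,*}(d)=0$ as soon as $-p>d-1$.

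The heart of the matter is~(iii), and here the plan is to present the horizontal complex as a direct sum, over combinatorial shapes, of tensor products of short local complexes. The key observation is that $\deltah$, defined in~(\ref{e15}), only ever replaces a single $\nabla$-vertex by a white vertex of the same total valence, leaving the black vertices, the anchor, all edges with their incidences, and every valence untouched. Thus $\deltah$ preserves the \emph{shape} of a graph -- by which I mean the underlying graph together with all its valences, but with the $\nabla$/white decoration of the non-black, non-anchor vertices forgotten -- so that $(\Gr^*(d),\deltah)$ splits as $\bigoplus_\Theta(\Gr^*_\Theta(d),\deltah)$ over shapes $\Theta$, and it suffices to treat one summand.

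I would then fix a shape $\Theta$ whose non-black, non-anchor vertices have valences $n_1,\dots,n_r$. At each such vertex the decoration is either the trivial module $\tE^1(n_j)={\mathbf 1}_{n_j}$ of a white vertex or the module $\tE^0(n_j)$ of a $\nabla$-vertex, and the local action of $\deltah$ is, up to sign, the symmetrization $\vartheta_\tE\colon \tE^0(n_j)\to\tE^1(n_j)$; this is the single point at which the torsion case genuinely departs from~\cite{janyska-markl}, as $\tE^0(n_j)$ now carries the regular factor $\bbR[\Sigma_2]$ in its last two inputs rather than the symmetric one. Since the decorations at distinct vertices are independent and $\deltah$ is a derivation, $(\Gr^*_\Theta(d),\deltah)$ is, up to Koszul signs, the tensor product of the two-term complexes $\tE^0(n_j)\xrightarrow{\ \vartheta_\tE\ }\tE^1(n_j)$ placed in degrees $0$ and $1$. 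Each such complex has vanishing cohomology in degree $1$, because $\vartheta_\tE$ carries the generator to $-1_{n_j}\neq 0$ and is therefore onto the one-dimensional $\tE^1(n_j)$, and cohomology $\Kr(n_j)=\Ker\vartheta_\tE$ in degree $0$. By the Künneth theorem the cohomology of each shape-summand is concentrated in total degree $0$, where it equals $\bigotimes_j\Kr(n_j)$; summing over shapes yields $H^m(\Gr^*(d),\deltah)=0$ for $m\neq 0$, which is~(iii), and at the same time identifies $H^0(\Gr^*(d),\deltah)$ with the graph space $\Gr[\Kr](d)$.

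The step I expect to cost the most care is the shape decomposition together with the passage to the tensor product: one must verify that fixing the incidence data really does make the $\nabla$/white decorations into independent degrees of freedom, track the Koszul signs produced by a chosen ordering of the decorated vertices, and account for the coinvariants forced by graph automorphisms. All of this runs parallel to the torsion-free argument of~\cite{janyska-markl}, but it must be re-examined here because the map $\vartheta_\tE$ and its kernel $\Kr(n)$ are strictly larger. By comparison, the local cohomology computation for $\tE^0(n)\to\tE^1(n)$ is routine once the surjectivity of $\vartheta_\tE$ onto the one-dimensional $\tE^1(n)$ is noted.
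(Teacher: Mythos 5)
Your proposal is correct and takes essentially the same approach as the paper: parts (i)--(ii) by elementary vertex/edge counting, and part (iii) by reducing everything to the surjectivity of $\vartheta_\tE : \tE^0(s) \to \tE^1(s)$, whose kernel $\Kr(s)$ yields $H^*(\Gr^*(d),\deltah) \cong \Gr[\Kr](d)$ concentrated in degree zero. The only difference is one of packaging: the shape-decomposition, K\"unneth, and characteristic-zero coinvariants argument that you carry out by hand is precisely what the paper invokes as a black box, namely the exactness of the functor $(E^*,\vartheta) \mapsto \Gr^*[E^*](d)$ established in \cite{mv} and \cite[Theorem~21]{markl:ha}.
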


\begin{proof} 

As in~\cite{janyska-markl}, properties (i)--(ii) follow from
simple graph combinatorics.
To verify~(iii), we follow~\cite{markl:na} and observe that
$(\Gr^*(d),\deltav)$ is a particular case of 
the following construction.
{}For each collection $(E^*,\vt) = \{(E^*(s),\vt)\}_{s
\geq 2}$ of right dg-$\Sigma_s$-modules $(E^*(s),\vt)$, one
considers the complex
$\Grd *{E^*} = (\Grd *{E^*},\vt)$ spanned by graphs with 
$d$ black vertices~(\ref{black}), one vertex \anchor\ 
and a finite number of vertices decorated by elements
of $E$. The grading of $\Grd *{E^*}$ 
is induced by the grading of $E^*$ and the differential $\vt$ replaces
$E$-decorated vertices, one at a time, by their $\vt$-images, leaving other
vertices unchanged.
Since
the assignment 
$(E^*,\vt) \mapsto (\Grd *{E^*},\vt)$ 
is an exact functor (\cite{mv}, see also~\cite[Theorem~21]{markl:ha}) , 
\[
H^*(\Grd *{E^*},\vt) \cong \Grd *{H^*(E,\vt)}.
\]

Let now $(E^*,\vt) = \{(E^*(s),\vt)\}_{s \geq 2}$ be such that
$E^0(s)$ is spanned by the symbols~(\ref{nabla}) with $u+2 = s$, $E^1(s)$
by the symbols~(\ref{white}) with $u = s$, and $E^m(s) = 0$ for $m \geq
2$. The differential $\vt$ is defined by the replacement
rule~(\ref{e15}). An equivalent description of $\vartheta : E^0(s) \to
E^1(s)$ is given on page~\pageref{a1}. It is clear that
\[
(\Gr^*(d),\deltah) \cong (\Grd *{E^*},\vt).
\]

Since $\vt : E^0(s) \to E^1(s)$ is onto, $H^*(E,\vt) =
\{H^*(E(s),\vt)\}_{s \geq 2}$ is concentrated in degree~$0$, with
$H^0(E(s),\vt)$ the kernel
\begin{equation}
\label{kernel}
\Kr(s) : = \Ker\left(\vt : E^0(s) \to E^1(s)\right).
\end{equation}
Denoting by $\Kr$ the collection $\Kr := \{\Kr(s)\}_{s \geq 2}$ we
conclude that 
\begin{equation}
\label{Jark}
H^*(\Gr^*(d),\deltah) \cong H^0(\Gr^*(d),\deltah) \cong \Gr[\Kr](d).
\end{equation}
In particular, $H^m(\Gr^*(d),\deltah) = 0$ for $m \not= 0$ which
establishes~(iii).\label{k}
\end{proof}

Properties (i)--(iii) of Lemma~\ref{l22} imply, by a standard spectral
sequence argument and the description~(\ref{Jark}) of the horizontal
cohomology, that
\[
H^0(\Gr^*(d),\delta) \cong H^0(\Gr^*(d),\deltah) \cong \Gr[\Kr](d).
\]
This, along with Theorem~\ref{Th2}, implies Theorem~D.

\vskip .5em
\noindent 
{\bf Proof of Theorem~C.}
Consider a~bicomplex $\bbB = (B^{*,*}, \delta= \deltah + \deltav)$
fulfilling (i)--(iii) of Lemma~\ref{l22} and denote by
$\Zh := \bigoplus_{r \geq 0}\Zh^r$ the (finite, by~(ii)) sum of the subspaces
\[
\Zh^r := \Ker(\deltah :B^{-r,r} \to B^{-r+1,r}). 
\]
The proposition below is a combination of Proposition~5.1 and
Corollary~5.4 of~\cite{janyska-markl}.

\begin{proposition}
\label{propo}
Let $G$ be a group and assume that the bicomplex $\bbB$ consists of
reductive $G$-modules and the differentials $\deltah$ and
$\deltav$ are $G$-equivariant. Then there exists a $G$-equivariant map
$\beta : \Zh = \bigoplus_{r \geq 0}\Zh^r \to \bigoplus_{r \geq 0}
B^{-r,r}$ such that, for each $r \geq 0$ and $z \in \Zh^r$, $\beta(z)$
is a cocycle in the total complex $\Tot(\bbB)$ of the form
$\beta(z) = z + \lot$,
with some $\lot \in \bigoplus_{p > r} B^{-p,p}$. 
\end{proposition}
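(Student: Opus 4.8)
The plan is to build $\beta(z)$ for $z \in \Zh^r$ by the standard zig-zag (staircase) procedure that lifts a horizontal cocycle living on the diagonal $p+q=0$ to a cocycle of the total complex $\Tot(\bbB)$, and then to upgrade this construction to a genuine $G$-equivariant linear map by using the reductivity hypothesis to make all intermediate choices canonical. Concretely, I would look for $\beta(z)$ in the form $\beta(z) = z + w_1 + w_2 + \cdots$ with $w_i \in B^{-(r+i),r+i}$, so that every summand lies on the diagonal and $\beta(z)$ has total degree $0$. Writing $\delta\beta(z) = (\deltah + \deltav)\beta(z) = 0$ and sorting by bidegree, the component in bidegree $(-(r-1),r)$ is just $\deltah z$, which vanishes because $z \in \Zh^r = \Ker(\deltah)$, while the remaining components yield the recursion $\deltah w_{i+1} = -\deltav w_i$, $i \geq 0$, with $w_0 := z$.

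First I would check that each recursion step is solvable. Assuming $w_0,\dots,w_i$ have been constructed, the right-hand side $-\deltav w_i$ lies in $B^{-(r+i),r+i+1}$, a bidegree with $p+q = 1 \neq 0$; moreover it is a horizontal cocycle, since $\deltah \deltav w_i = -\deltav \deltah w_i = \deltav \deltav w_{i-1} = 0$ by the anticommutation of $\deltah$ and $\deltav$ together with the previous step of the recursion (the base case $i=0$ instead uses $\deltah z = 0$ directly). Property~(iii) of Lemma~\ref{l22} says the horizontal cohomology vanishes off the diagonal, so $-\deltav w_i \in \Im(\deltah)$ and a solution $w_{i+1}$ exists. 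Property~(ii) guarantees that $w_i = 0$ once $-(r+i)$ drops below the support of $\bbB$, so the sum defining $\beta(z)$ is finite and $\beta(z)$ is a well-defined total cocycle of the claimed form $z + \lot$ with $\lot \in \bigoplus_{p > r} B^{-p,p}$.

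The hard part — and the only place the hypotheses on $G$ enter — is to perform these choices so that $\beta$ is linear and $G$-equivariant rather than a merely set-theoretic lift. Here I would exploit reductivity: in each row $q = r+i+1$ the horizontal differential restricts to a surjection $\deltah : B^{-(r+i+1),r+i+1} \to Z_i$, where $Z_i := \Ker(\deltah : B^{-(r+i),r+i+1} \to B^{-(r+i-1),r+i+1})$ coincides with $\Im(\deltah)$ by~(iii). Choosing a $G$-equivariant complement to $\Ker(\deltah)$ inside $B^{-(r+i+1),r+i+1}$ — possible because these are reductive $G$-modules with $G$-equivariant differentials — yields a $G$-equivariant linear section $s_i : Z_i \to B^{-(r+i+1),r+i+1}$ with $\deltah s_i = \id$. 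Setting $w_{i+1} := -s_i(\deltav w_i)$ makes each $w_{i+1}$ a composite of $G$-equivariant linear maps, hence $G$-equivariant and linear in $z$ by induction, while still satisfying $\deltah w_{i+1} = -\deltav w_i$. Summing over $i$ then produces the desired $\beta$. I expect the bidegree bookkeeping and the repeated verification that $\deltav w_i$ really lands in $Z_i$ to be the routine-but-delicate part; the genuinely essential idea is simply to replace arbitrary preimages by the images of a fixed equivariant section.
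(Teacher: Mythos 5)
Your proposal is correct and takes essentially the same route as the paper's: the paper does not prove Proposition~\ref{propo} here but imports it from \cite{janyska-markl} (a combination of Proposition~5.1 and Corollary~5.4 there), and the underlying argument is precisely your staircase construction, solving $\deltah w_{i+1} = -\deltav w_i$ diagonal by diagonal using the off-diagonal vanishing of horizontal cohomology (property~(iii) of Lemma~\ref{l22}), with reductivity providing $G$-equivariant sections of $\deltah$ so that $z \mapsto \beta(z)$ is a linear, equivariant map. Your use of property~(ii) for finiteness of the sum and the anticommutation $\deltah\deltav + \deltav\deltah = 0$ (the Mac~Lane convention the paper adopts) are exactly the points the cited proof relies on as well.
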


By a simple spectral sequence argument, any $\beta$ as in
Proposition~\ref{propo} induces an isomorphism (denoted $\beta$ again)
\[
\beta: \Zh \stackrel{\cong}\to H^0(\Tot(\bbB)).
\]

Let $\alpha^i_{n,\sigma}$, $\sigma \in \Sigma'_n$, be
coefficients as in Theorem~A.
If we take the symbol~(\ref{nabla}), with the inputs numbered
consecutively from left to right by $\{\rada 1s\}$, as the generator of
$E^0(s)$, then $\Kr(s)$ is, as a $\Sigma_s$-module, generated by the
linear combinations
\begin{equation}
\label{xi}
\xi^i_s :=  \sum_{\sigma \in \Sigma'_s}  \alpha^i_{s,\sigma}\hskip 3.8em
\unitlength 5mm
\begin{picture}(.7,2)(-.5,-.5)
\put(-.5,0){\makebox(0,0)[cc]{\Large$\nabla$}}
\put(-.5,.4){\vector(0,1){1}}
\put(.1,-1.25){\vector(-1,2){.5}}
\put(.59,-1.22){\vector(-1,1){.97}}
\unitlength 3mm
%dolni leve vektory
\put(-21,-20){
\put(19.4,18){\vector(1,4){0.475}}
\put(18.9,18){\vector(1,2){.95}}
\put(16,18){\vector(2,1){3.8}}
\put(18.75,17){\makebox(0,0)[cc]{$\ldots$}}
}
\put(-6.5,-3.2){{\scriptsize $\sigma(1)$}}
\put(-.1,-3.2){{\scriptsize $\sigma(s)$}}
\end{picture}
\hskip 1.5em =   \sum_{\sigma \in \Sigma'_s}  \alpha^i_{s,\sigma} \hskip 3.2em
\begin{picture}(.7,2)(-.5,-.5)
\put(-.5,0){\makebox(0,0)[cc]{\Large$\nabla$}}
\put(-.5,.4){\vector(0,1){1}}
\put(.1,-1.25){\vector(-1,2){.5}}
\put(.59,-1.22){\vector(-1,1){.97}}
\unitlength 3mm
%dolni leve vektory
\put(-21,-20){
\put(19.4,18){\vector(1,4){0.475}}
\put(18.9,18){\vector(1,2){.95}}
\put(16,18){\vector(2,1){3.8}}
\put(17.75,18){\makebox(0,0)[cc]{$\ldots$}}
}
\end{picture}
\hskip .3em
\cdot \sigma\ ,\ 1 \leq i \leq k_s.
\raisebox{-2.5em}{\rule{0pt}{0pt}}
\end{equation}

For an arbitrary $k$, $0 \leq k \leq d$, denote by $\Gr^*(d)_k$ the
subspace of $\Gr^*(d)$ spanned by graphs with a~distinguished labelled
subset $\{\anteanchor1, \ldots, \anteanchor k\}$ of the set of black
vertices~(\ref{black}) with $u=0$.  There is a right $\Sigma_k$-action
on the space $\Gr^*(d)_k$ that permutes the labels of the
distinguished vertices. Let $\Gr^*_k := \bigoplus_{d \geq
k}\Gr^*(d)_k$. We wish to have, for each $(n,i) \in S$, cochains
$\varsigma^i_n \in \Gr^0(n)_n$ of the form
\begin{equation}
\label{kn}
\varsigma^i_n = \xi^i_n + \lot
\end{equation}
where $\xi^i_n$ is as in~(\ref{xi}) and l.o.t.\ a linear
combination of graphs with at least two $\nabla$-vertices. 
We also wish to have, for each $n \geq 0$, cochains
$\nu_n \in \Gr^0(n+1)_n$ of the form
\begin{equation}
\label{nn}
\nu_n = b_n + \lot
\end{equation}
where $b_n$ denotes the black vertex~(\ref{black}) with\label{gerda}
$u=n$. The abbreviation l.o.t.\ means here a linear combination of graphs
in $\Gr^0(n+1)_n$ that has at least one $\nabla$-vertex.

\begin{proposition}
\label{eqv}
There are `equivariant' cocycles 
$\{\nu_n\}_{n \geq 2}$ and $\{\varsigma^i_n\}_{(n,i) \in S}$ that enjoy the
same symmetries as the elements $\{b_n\}_{n \geq 2}$ and
$\{\xi^i_n\}_{(n,i) \in S}$.
\end{proposition}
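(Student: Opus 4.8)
The plan is to realize $\nu_n$ and $\varsigma^i_n$ as the images of $b_n$ and $\xi^i_n$ under an equivariant version of the map $\beta$ supplied by Proposition~\ref{propo}, applied not to $\Gr^{*,*}(d)$ itself but to the bicomplex of graphs carrying $n$ distinguished vertices. Fixing $n$, I would work inside $\Gr^{*,*}_n := \bigoplus_{d \geq n}\Gr^{*,*}(d)_n$, equipped with the right $\Sigma_n$-action permuting the labels of the $n$ distinguished black vertices. Since these are black vertices~(\ref{black}) with $u=0$, on which the replacement rules of Section~\ref{proofs} act trivially, the differentials $\deltah$ and $\deltav$ neither create nor destroy distinguished vertices and are manifestly $\Sigma_n$-equivariant; thus $\Gr^{*,*}_n$ is a bicomplex of $\Sigma_n$-modules with equivariant differentials.

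Next I would check that $\Gr^{*,*}_n$ still satisfies (i)--(iii) of Lemma~\ref{l22}. Properties (i)--(ii) are again elementary graph combinatorics, insensitive to the marking of some black vertices. For (iii) the exact-functor argument of Lemma~\ref{l22} goes through verbatim: the distinguished vertices are inert under $\deltah$, so the horizontal complex is once more of decorated-graph type with the distinguished structure simply carried along, and its cohomology $\Gr[\Kr](d)_n$ is concentrated on the diagonal $p+q=0$. Because $\Sigma_n$ is finite and we work over $\RRR$, every $\Sigma_n$-module is reductive, so the hypotheses of Proposition~\ref{propo} hold with $G := \Sigma_n$. This yields a $\Sigma_n$-equivariant map $\beta : \Zh \to \bigoplus_{r \geq 0}\Gr^{-r,r}_n$ sending each $z \in \Zh^r$ to a cocycle $\beta(z) = z + \lot$ in $\Tot(\Gr^{*,*}_n)$, with $\lot \in \bigoplus_{p>r}\Gr^{-p,p}_n$.

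Now I would note that $b_n$, having no $\nabla$-vertex, lies in $\Zh^0$, while $\xi^i_n$, being an element of $\Kr(n)=\Ker(\deltah)$ with a single $\nabla$-vertex, lies in $\Zh^1$; both are of total degree~$0$. Setting $\nu_n := \beta(b_n)$ and $\varsigma^i_n := \beta(\xi^i_n)$ then produces $\delta$-cocycles in $\Gr^0(n+1)_n$ and $\Gr^0(n)_n$ of exactly the shapes~(\ref{nn}) and~(\ref{kn}): the correction term of $\nu_n$ carries at least one $\nabla$-vertex and that of $\varsigma^i_n$ at least two. Equivariance delivers the symmetries: whenever $\frakS \in \RRR[\Sigma_n]$ satisfies $b_n\frakS = 0$ (resp.\ $\xi^i_n\frakS = 0$), one has $\nu_n\frakS = \beta(b_n)\frakS = \beta(b_n\frakS) = 0$ (resp.\ $\varsigma^i_n\frakS = \beta(\xi^i_n\frakS) = 0$), so $\nu_n$ and $\varsigma^i_n$ carry precisely the symmetries of $b_n$ and $\xi^i_n$.

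The step I expect to require the most care is the second one: confirming that the introduction of distinguished vertices does not disturb the concentration of the horizontal cohomology on the diagonal, so that Lemma~\ref{l22}(iii) and hence Proposition~\ref{propo} remain applicable to $\bbB = \Gr^{*,*}_n$. Once the decorated-graph/exact-functor formalism is seen to survive the marking, the remainder is a direct application of Proposition~\ref{propo} together with the semisimplicity of finite-group representations in characteristic zero.
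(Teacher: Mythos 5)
Your proof is correct and takes essentially the same approach as the paper: the paper also applies Proposition~\ref{propo}, with $G = \Sigma_n$ permuting the labels of the distinguished vertices, to the bicomplex of marked graphs (it uses $\Gr^*(n+1)_n$ for $\nu_n$ and $\Gr^*(n)_n$ for $\varsigma^i_n$ rather than the direct sum over all $d$), and defines $\nu_n := \beta(b_n)$ and $\varsigma^i_n := \beta(\xi^i_n)$. The extra details you supply --- verifying conditions (i)--(iii) of Lemma~\ref{l22} for the marked bicomplex, reductivity of $\Sigma_n$-modules over $\RRR$, and the placement of $b_n$ in $\Zh^0$ and of $\xi^i_n$ in $\Zh^1$ --- are exactly what the paper's terse proof leaves implicit.
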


\begin{proof}
The obvious modification of the bigrading~(\ref{bigrad}) turns
$\Gr^*(n+1)_n$ into a bicomplex satisfying conditions (i)--(iii) of
Lemma~\ref{l22}. The group $\Sigma_n$ permutes the
distinguished vertices. This action satisfies the requirements of
Proposition~\ref{propo} which therefore gives a $\Sigma_n$-equivariant
$\beta$. The element $\nu_n := \beta(b_n)$ is then the required
`equivariant' cocycle. An `equivariant' $\varsigma^i_n$ can be
constructed in the same fashion, taking $\Gr^*(n)_n$ instead of
$\Gr^*(n+1)_n$.
\end{proof}

The `ideal' tensors in Theorem~C are the natural operators
related, in the correspondence of Theorem~\ref{Th2},
to the cocycles $\{\varsigma^i_n\}_{(n,i) \in S}$ and
$\{\nu_n\}_{n \geq 2}$ constructed in Proposition~\ref{eqv}.

\vskip .5em
\noindent 
{\bf Proof of Theorem~A.}
Each iteration as in~\ref{iter}
is clearly a linear combination of terms given by contracting `free'
indexes of the local coordinate expressions of the operators
$\{D^i_n\}_{(n,i) \in S}$ and $\{V_n\}_{n \geq 2}$. Each such a contraction
is determined by a `contraction scheme,' which is a graph
with vertices of the following two types:

\begin{itemize}
\item[--] 
vertices $d^i_n$, $(n,i) \in S$, with $n$ linearly
ordered input edges and one output, and
\item[--] 
vertices $v_n$, $n \geq 0$, labeled $\rada 1d$, with $n$ linearly
ordered edges and one output.
\end{itemize}
Denote by $\Cont(d)$ the space spanned by the above contraction
schemes. One has the diagram
\begin{equation}
\label{diag}
\Gr[\Kr](d) \stackrel\pi\twoheadleftarrow \Cont(d)  \stackrel\Psi\to 
\Gr^0(d)
\end{equation}
in which the map $\pi$ replaces each vertex $d^i_n$ of a contraction
scheme $K \in \Cont(d)$ by $\xi^i_n$ defined in~(\ref{xi})
and each vertex $v_n$ by $b_n$ defined in~(\ref{black}).
The map $\Psi$ is the cocycle representing the iteration determined by
$K$.

The fact that $\pi$ is an epimorphism can be established, as
in~\cite{janyska-markl}, by constructing a right inverse
$s : \Gr[\Kr](d) \to \Cont(d)$ of $\pi$. The map $\beta = \Psi \circ
s$ has the properties as in Proposition~\ref{propo} (with trivial $G$).
It therefore induces an isomorphism $\Gr[\Kr](d) \cong
H^0(\Gr^*(d),\delta)$. In particular, the map $\Psi$ is an epimorphism
onto $\Ker(\delta : \Gr^0(d) \to \Gr^1(d)) = H^0(\Gr^*(d),\delta)$.
This, along with Theorem~\ref{Th2}, proves Theorem~A.

\vskip .5em
\noindent 
{\bf Proof of Theorem~B.}  One assigns to each graph $\Graphname \in
\Gr[\Kr](d)$ the (formal) vf-order defined by the summation
\begin{equation}
\label{eqaut}
\vforder(\Graphname) := \sum_{v \in \Vert(\Graphname)} \vforder(v),
\end{equation}
where 
\[
\vforder(v) := \cases{0}{if $v$ is $\xi^i_n$, $(n,i) \in S$, and}
                        n{if $v$ is $b_n$, $n \geq 0$.}
\]

The vf-order of a contraction scheme $G \in \Cont(d)$ can be
defined similarly, with the role of vertices $\xi_n^i$ played by
$d_n^i$, and the role of vertices $b_n$ by $v_n$.
Therefore, if a contraction scheme has vertices
$\rada{v_{p_1}}{v_{p_t}}$ for some $\Rada
p1t \geq 0$ (plus possibly some other vertices of either types), then
\begin{equation}
\label{e45}
p_1 + \cdots + p_t \leq \vforder(G).
\end{equation}

Finally, the vf-order of a graph $\Graphname$ in $\Gr^0(d)$ is
given by formula~(\ref{eqaut}) in which we define now
\[
\vforder(v) := \cases{0}{if $v$ is a $\nabla$-vertex, and}
                        n{if $v$ is $b_n$, $n \geq 0$.}
\]

The vf-order of an element of $\Gr[\Kr](d)$ (resp.~$\Cont(d)$,
resp.~$\Gr^0(d)$) is then the maximum of
vf-orders of its linear constituents. 
It is clear that the (formal) vf-order of a cocycle in $\Gr^0(d)$
equals the vf-order of the operator it represents.

As in~\cite{janyska-markl} one shows that map $\beta = \Phi \circ s :
\Gr[\Kr] \to H^0(\Gr^*(d),\delta)$ (which is an isomorphism, by the
stability assumption $\dim(M) \geq 2d-1$) constructed in the proof of
Theorem~A preserves the vf-order.  Let $\gO \in \Nat(\Contor \times
T^{\otimes d},T)$ be a differential operator represented by a cocycle
$c \in \Gr^0(d)$, $y := \beta^{-1}(c)$ and $C := s(y)$. According to
our constructions, $C \in \Cont(d)$ describes an iteration of
$\{D^i_n\}_{(n,i) \in S}$ and $\{V_n\}_{n \geq 1}$ representing $\gO$.
Since, as in~\cite{janyska-markl} both $\beta$ and $s$ preserve the
vf-order, one has $\vforder(C) = \vforder(\gO)$.  Theorem~C now
immediately follows from formula~(\ref{e45}).

%\bibliography{/root/tex/latex/b}
\def\cprime{$'$}

\vfill

\end{document}